\newcommand{\eps}{\varepsilon}
\newtheorem{proposition}{Proposition}
\newtheorem{theorem}[proposition]{Theorem}
\newtheorem{corollary}[proposition]{Corollary}
\theoremstyle{remark}
\newtheorem{remark}[proposition]{Remark}
\theoremstyle{definition}
\newtheorem{definition}[proposition]{Definition}
\numberwithin{equation}{section}
\numberwithin{proposition}{section}
\begin{document}

\title[Asymmetric equilibrium configurations of a FSI problem]{Asymmetric equilibrium configurations\\ of a body immersed in a 2D laminar flow}
\author[Edoardo Bocchi and Filippo Gazzola]{Edoardo Bocchi$^\ast$ and Filippo Gazzola$^\ast$}\thanks{$^\ast$Dipartimento di Matematica, Politecnico di Milano, Italy - MUR Excellence Department 2023-2027. Email: edoardo.bocchi@polimi.it - filippo.gazzola@polimi.it}

\begin{abstract}
We study the equilibrium configurations of a possibly asymmetric fluid-structure-interaction problem. The fluid is confined in a bounded planar channel and
is governed by the stationary Navier-Stokes equations with laminar inflow and outflow. A body is immersed in the channel and is subject to both the lift force from the fluid and to some external elastic force. Asymmetry, which is
motivated by natural models, and the possibly non-vanishing velocity of the fluid on the boundary of the channel require the introduction of suitable assumptions to prevent collisions of the body with the boundary. With these assumptions at hand, we prove that for sufficiently small
inflow/outflow there exists a unique equilibrium configuration. Only if the inflow, the outflow and the body are all symmetric, the configuration is also symmetric. A model application is also discussed.\\
\textbf{Mathematics Subject Classification:}
35Q35, 
76D05, 
74F10. 
\end{abstract}

\maketitle

\section{Introduction}\label{intro}
Let $L>H>0$ and consider the rectangle $R=(-L,L)\times(-H,H)$. Let $B\subset R$ be a closed smooth domain having barycenter at the origin
$(x_1,x_2)=(0,0)$ and such that $\mathrm{diam}(B)\ll L,H$. We study the behavior of a stationary laminar (horizontal) fluid flow
going through $R$ and filling the domain $\Omega_h= R\setminus B_h$, where $B_h=B+he_2$ for some $h$ (a vertical translation of $B$),
see Figure \ref{domain}. Note that $B_0=B$.
\begin{figure}[!h]
	\begin{center}
		\includegraphics[width=\textwidth]{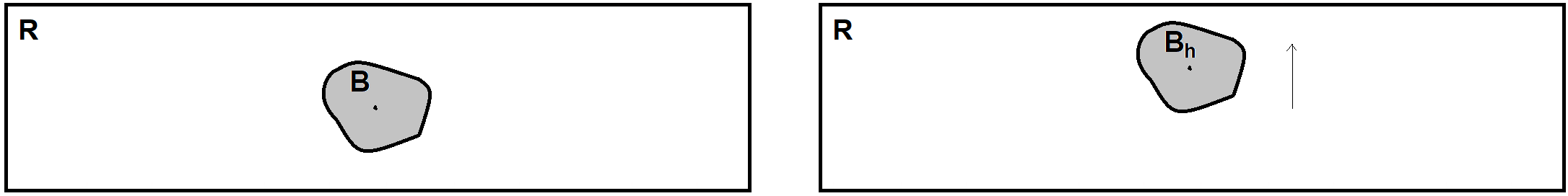}
		\caption{The rectangle $R$ and the body $B$ with its vertical displacements $B_h$.}\label{domain}
	\end{center}
\end{figure}

The fluid is governed by the stationary 2D Navier-Stokes equations
\begin{equation}\label{NS}-\mu\Delta u+u\cdot \nabla u+ \nabla p = 0,\quad\nabla \cdot u =0\quad\mbox{in}\quad \Omega_h,\end{equation}
complemented with inhomogeneous Dirichlet boundary conditions on $\partial\Omega_h=\partial B_h\cup \partial R$, see \eqref{BVprob} below.
Here, $\mu>0$ is the kinematic viscosity, $u$ is the velocity vector field, $p$ is the scalar pressure.\par
The body $B$ is subject to two vertical forces. The first force (the lift) is due to the fluid flow and tends to move $B$
away from its original position $B_0$, it is expressed through a boundary integral over $\partial B$, see \eqref{lift} below.
The second force is mechanical (elastic) and acts as a restoring force tending to maintain
$B$ in $B_0$. When there is no inflow/ouflow, the body is only subject to the restoring force and remains in $B_0$ which
is the unique equilibrium position. But, as soon as there is a fluid flow, these two forces start competing and one may wonder if the
body remains in $B_0$ or, at least, if the equilibrium position remains unique.\par
We show that, if the inflow/ouflow is sufficiently small, then the equilibrium position of $B$ remains unique and coincides with $B_h$
for some $h$ close to zero. We point out that, contrary to \cite{BonGalGaz20,gazzolapatriarca,GazSpe20}, {\it we make no symmetry assumptions neither
on $B$ nor on the laminar inflow/outflow}. Therefore, not only the overall configuration will be asymmetric but also some of the techniques developed in these papers do not work and $B_h$ may be different
from $B_0$. The motivation for studying
asymmetric configurations comes from nature. Only very few bodies are perfectly symmetric and most fluid
flows, although laminar in the horizontal direction, are asymmetric in the vertical direction: think of an horizontal wind depending on the altitude or the water flow in a river
depending on the distance from the banks. Figure \ref{windstorm} shows two front waves in sandstorms that have
no vertical symmetry although the wind is (almost) horizontally laminar.
\begin{figure}[!h]
\begin{center}
\includegraphics[width=0.49\textwidth]{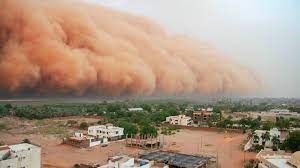}\quad\includegraphics[width=0.46\textwidth]{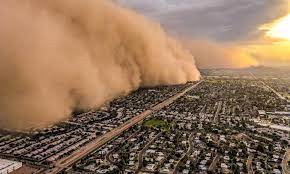}
\caption{Front wave of two wind storms.}\label{windstorm}
\end{center}
\end{figure}

In Section \ref{fluid-section} we give a detailed description of our model and we prove that, for small Reynolds numbers,
the Navier-Stokes equations are uniquely solvable in any $\Omega_h$, see Theorem \ref{strong-exiuni}. The related a priori bounds
depend on $h$, and this is one crucial difference compared to the (symmetric) Poiseuille inflow/outflow considered in \cite{BonGalGaz20}.
It is well-known \cite{Galdi-steady} that
to solve inhomogeneous Dirichlet problems for the Navier-Stokes equations, one needs to find a solenoidal extension of the boundary
data and to transform the original problem in an homogeneous Dirichlet problem with an additional source term. For the existence issue,
one can use the classical Hopf extension, but there are infinitely many other possible choices for the solenoidal extension. One of them, introduced
in \cite{HoLeal74}, was used in \cite{BonGalGaz20} to write the lift force as a volume
integral by means of the solution of an auxiliary Stokes problem. For asymmetric flows, the same solenoidal extension does not allow
to estimate all the boundary terms and, in order to obtain refined bounds for the solution to the Navier-Stokes equations in $\Omega_h$,
we build a new explicit solenoidal extension that also plays a fundamental role in the analysis of the subsequent fluid-structure-interaction (FSI) problem.\par
The main physical interest in FSI problems is to determine the $\omega$-limit of the associated evolution equations
because this allows to forecast the long-time behavior of the structure. Since the evolution Navier-Stokes equations are dissipative,
one is led to investigate if the global attractor exists, see \cite{gazpatpat,clara}: the main difficulty is that the corresponding
phase space is time-dependent and semigroup theory does not apply. The global attractor contains stationary
solutions of the evolution FSI problem that we call equilibrium configurations, which are investigated in the present work.\par
In Section \ref{sec-FSI} we introduce the lift force and the restoring force and we set up the steady-state FSI problem. Our main result, namely Theorem
\ref{main-theo}, states that, for small Reynolds numbers, the equilibrium position is unique and may differ from $B_0$. By exploiting the strength of the restoring force,  uniqueness for the FSI problem is obtained without assuming uniqueness for \eqref{NS}.
To prove this result, we need some bounds on the lift force in proximity of collisions of $B_h$ with $\partial R$: these bounds are
collected in Theorem \ref{lift-theo} and proved in Section \ref{sec-proofTheos} by using the very same solenoidal extension introduced in
Section \ref{fluid-section}. The remaining part of the proof of Theorem \ref{main-theo} is divided in two steps. In Subsection
\ref{cont-mono-phi} we prove some properties of the global force exerted on the body $B$. These properties are then used in
Subsection \ref{conclu-proof} to complete the proof by means of an implicit function argument, combined with some delicate bounds
involving derivatives of moving boundary integrals. We emphasize that for our FSI problem we cannot use the explicit expression of the lift derivative as
in \cite{Piro74} because the displacements $B_h$ within $R$ do not follow the normal of $\partial B_h$, in particular if
$\partial B_h$ contains some vertical segments. Instead, based on the general approach introduced in \cite{BelFCLemSim97} (see also the previous work \cite{MurSim74}), we compute with high precision
the lift variation with respect to the vertical displacement parameter $h$ of $B_h$ by acting directly on the strong form of the FSI problem.\par
Section \ref{FSIsymm} contains the symmetric version of Theorem \ref{main-theo}, see Theorem \ref{symmtheo} which states that, under
symmetry assumptions on the inflow/outflow and on $B$, for small Reynolds numbers the equilibrium position is unique and coincides with $B_0$.
This extends former results in \cite{BonGalGaz20,gazzolapatriarca,GazSpe20} to a wider class of symmetric frameworks.\par
As an application of our results, in Section \ref{suspension} we consider a model where $B_h$ represents the cross-section of the deck of a
suspension bridge \cite{gazzbook}, while $\Omega_h$ is filled by the air and represents either a virtual box
around the deck or a wind tunnel around a scaled model of the bridge. Since the deck may have a nonsmooth boundary,
we also explain how to extend our results to the case where $B$ is merely Lipschitz.

\section{Fluid boundary-value problem}\label{fluid-section}

Let $R$ and $B$ be as in Section \ref{intro} (Figure \ref{domain}) with
\begin{equation}\label{B-W2infty}
	B \mbox{ of class } W^{2,\infty}.
\end{equation}
On the one hand, \eqref{B-W2infty} ensures the regularity $(u,p)\in H^2(\Omega_h)\times H^1(\Omega_h)$ for the solutions to \eqref{NS}, see \cite[Theorem 2.1]{MurSim74} and  Theorem \ref{strong-exiuni} below. On the other hand, in engineering applications $B$ is usually a polygon with rounded corners, see Section \ref{suspension}, which belongs to $W^{2,\infty}$ but not to $C^2$. Let
\begin{equation}\label{def-delta}\delta_b:= -\min_{(x_1,x_2)\in \partial B}x_2>0, \quad \delta_t:= \max_{(x_1,x_2)\in \partial B}x_2>0, \quad \tau:= \max_{(x_1,x_2)\in \partial B}|x_1|.\end{equation}
Since we consider vertical displacements $B_h$ within $R$, we have $h\in(-H+\delta_b,H-\delta_t)$ and
$B_h \subset [-\tau,\tau]\times  [h-\delta_b, h +\delta_t]$ for any such $h$. Then, $\partial\Omega_h=\partial B_h\cup\partial R$.
The bottom and top parts of $\partial R$ are respectively
$$\Gamma_b=[-L,L]\times\{-H\} \quad \mbox{and} \quad\Gamma_t=[-L,L]\times\{H\},$$ while its lateral left and right parts are, respectively,
$$\Gamma_l=\{-L\}\times[-H,H]\quad \mbox{and} \quad\Gamma_r=\{L\}\times[-H,H].$$
Let $V_{\rm in}, V_{\rm out}\in W^{2,\infty}(-H,H)\subset  C^0[-H,H]$ satisfy
\begin{equation}\begin{aligned}\label{match-conds}
V_{\rm in}(-H)&=V_{\rm out}(-H)=0, \quad V_{\rm in}(H)=V_{\rm out}(H)=U\geq 0, \\[2pt]& \int_{-H}^H V_{\rm in}(x_2) dx_2= 	\int_{-H}^H V_{\rm out}(x_2) dx_2.\end{aligned}
\end{equation}
For some $\lambda\ge0$, we consider the boundary-value problem
\begin{equation}\label{BVprob}
\begin{array}{cc}
-\mu\Delta u + u\cdot \nabla u + \nabla p = 0,\qquad\nabla \cdot u =0\quad\mbox{in}\quad \Omega_h,\\[2pt]
u_{|_{\partial B_h}}\!=\!u_{|_{\Gamma_b}}\!=0, \quad u_{|_{\Gamma_t}}\!=\lambda Ue_1,\quad
u_{|_{\Gamma_l}}\!=\lambda V_{\rm in}(x_2)e_1,\quad u_{|_{\Gamma_r}}\!=\lambda V_{\rm out}(x_2)e_1.\\[2pt]
\end{array}
\end{equation}
Note that $u_{|_{\partial R}}\in C^0(\partial R)$ and \eqref{match-conds}-\eqref{BVprob} are compatible with the Divergence Theorem.
The role of $\lambda\geq 0$ in the boundary conditions is to measure with a unique parameter
the strength of both the inflow and the outflow. Hence, $\lambda\asymp\rm{Re}$ where $\rm{Re}$ is the Reynolds number.

\begin{definition}
	We say that $(u,p)\in H^2(\Omega_h)\times H^1(\Omega_h)$ is a strong solution to \eqref{BVprob} if the differential equations are satisfied
	a.e.\ in $\Omega_h$ and the boundary conditions are satisfied as restrictions (recall that $H^2(\Omega_h)\subset C^0(\overline{\Omega_h})$).
\end{definition}

We now state an apparently classical existence and uniqueness result which, however, has some novelties. First, since the domain $\Omega_h$
is only Lipschitzian, the regularity of the solution is obtained through a geometric reflection. More important, the explicit
upper bound for the blow-up of the $H^1$-norm of the unique solution to \eqref{BVprob} in proximity of collision: when $B$ approaches $\Gamma_t$
the norm remains bounded while when $B$ approaches $\Gamma_b$ we estimate its blow-up. This refined bound requires the construction of a suitable
solenoidal extension of the boundary data. Note that, up to normalization, we can reduce to the cases where
\begin{equation}\label{U01}
U\in\{0,1\}.\vspace{0.5em}\end{equation}
In order to state the result, we define the distances of the body $B_h$
to $\Gamma_b$ and $\Gamma_t$ respectively by
\begin{equation}\label{parameters}
\eps_b(h):= H-\delta_b + h, \qquad\eps_t(h):= H-\delta_t -h.
\end{equation}
Hence, $0<\eps_b(h),\eps_t(h) \leq 2H -\delta_b -\delta_t$ for any $h\in(-H+\delta_b, H-\delta_t)$.
Throughout the paper, any (positive) constant depending only on $\mu$, $B_0$, $L$, $H$ will be denoted by $C$ and, when it depends also on $h$, by $C_h$. We may now state

\begin{theorem}\label{strong-exiuni}
	Let $h\in(-H+\delta_b,H-\delta_t)$ and assume \eqref{match-conds} with \eqref{U01}.
	Then \eqref{BVprob} admits a strong solution $(u,p)$ for any $\lambda\geq0$ and there exists $\Lambda=\Lambda(h)>0$
 such that the solution
is unique if $\lambda\in [0,\Lambda(h))$;  if $U=0$, $\Lambda(h)$ can be chosen independent of $h$, \textit{i.e.} $\Lambda(h)\equiv\Lambda>0$. Moreover, there exist $C>0$ and $C_h>0$ such that
	the unique solution (when $\lambda<\Lambda(h)$) satisfies
	\begin{align}\label{energy-est}
&	\| u\|_{H^1(\Omega_h)}  \leq C	(1+U(\eps_t(h))^{-3/2})\lambda,\\[2pt]
\label{reg-est}
&	\|u\|_{H^2(\Omega_h)} + \|p\|_{H^1(\Omega_h)} \leq C_h \lambda.
	\end{align}
	A priori bounds such as \eqref{energy-est} and \eqref{reg-est} are available for any $\lambda \geq 0$ and any strong solution of \eqref{BVprob}, but
	with different powers of $\lambda$.
\end{theorem}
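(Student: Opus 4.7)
The plan is to reduce \eqref{BVprob} to a homogeneous Dirichlet problem by writing $u=v+w$, where $w\in H^1(\Omega_h)$ is an explicit solenoidal extension of the boundary datum, and to analyze the resulting perturbed Navier--Stokes system for $v\in H^1_0(\Omega_h)$. The crux, and the main obstacle, is designing $w$ so that
\begin{equation*}
\|w\|_{H^1(\Omega_h)}\leq C\bigl(1+U\eps_t(h)^{-3/2}\bigr)\lambda
\end{equation*}
with a constant $C$ independent of $h$, and with no blow-up in $\eps_b(h)$. The asymmetry comes directly from the prescribed data: on $\Gamma_b$ and $\partial B_h$ the velocity vanishes, so the thin bottom gap requires no correction, whereas on $\Gamma_t$ the datum jumps from $0$ on $\partial B_h$ to $\lambda U e_1$ across a vertical gap $\eps_t(h)$. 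I would build $w$ as the sum of a globally defined Poiseuille-like field interpolating $\lambda V_{\rm in}$ and $\lambda V_{\rm out}$ between $\Gamma_l$ and $\Gamma_r$ (contributing $O(\lambda)$ independently of $h$), plus a localized shear layer above $B_h$ corrected by a Bogovskii-type divergence operator to restore solenoidality and to vanish on $\partial B_h$; this last piece is the source of the $\eps_t(h)^{-3/2}$ factor.

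With such a $w$ in hand, the unknown $v$ is divergence-free, vanishes on $\partial\Omega_h$, and satisfies
\begin{equation*}
-\mu\Delta v+v\cdot\nabla v+v\cdot\nabla w+w\cdot\nabla v+\nabla p=\mu\Delta w-w\cdot\nabla w\quad\text{in }\Omega_h.
\end{equation*}
A Galerkin approximation on the divergence-free subspace of $H^1_0(\Omega_h)$, combined with the standard energy estimate obtained by testing with $v$, yields a solution: the term $\int v\cdot\nabla v\cdot v$ vanishes by integration by parts, the term $\int w\cdot\nabla v\cdot v$ vanishes because $w$ is solenoidal and $v$ vanishes on $\partial\Omega_h$, and the only delicate contribution $\int v\cdot\nabla w\cdot v$ is estimated by $C\|\nabla w\|_{L^2}\|\nabla v\|_{L^2}^2$ via the 2D embedding $H^1_0\hookrightarrow L^4$, hence absorbed by $\mu\|\nabla v\|_{L^2}^2$ provided $\|w\|_{H^1}$ is small enough. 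Existence for arbitrary $\lambda\geq 0$ then follows from a Leray--Schauder argument with a weaker a priori bound polynomial in $\lambda$, which also produces the closing remark of the theorem. In the small-$\lambda$ regime, combining the linear bound on $\|v\|_{H^1}$ with $\|u\|_{H^1}\leq\|v\|_{H^1}+\|w\|_{H^1}$ gives \eqref{energy-est}.

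Uniqueness is obtained by the standard energy argument: for two solutions $u_1,u_2$, the difference $\bar u$ is divergence-free, vanishes on $\partial\Omega_h$, and satisfies a Stokes problem with convective perturbation. Testing with $\bar u$ and again using $H^1\hookrightarrow L^4$ gives $\mu\|\nabla\bar u\|_{L^2}^2\leq C\|u_1\|_{H^1}\|\nabla\bar u\|_{L^2}^2$, so $\bar u\equiv 0$ as soon as $\|u_1\|_{H^1}$ is small. By \eqref{energy-est}, this smallness holds whenever $\lambda$ lies below an explicit threshold $\Lambda(h)$ inversely proportional to $1+U\eps_t(h)^{-3/2}$; when $U=0$ this threshold is $h$-independent, as required.

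Finally, the strong-solution regularity $(u,p)\in H^2\times H^1$ is the only step where the Lipschitz corners of $\partial R$ intervene. Away from those four corners, $\partial\Omega_h$ is $W^{2,\infty}$ by \eqref{B-W2infty}, so classical Cattabriga regularity for the Stokes system applies locally. At each right-angled corner of $R$, I would double-reflect $v$ across the two adjacent sides of $\partial R$: odd reflection in the normal component preserves both solenoidality and the homogeneous Dirichlet condition, and \eqref{match-conds} together with the $W^{2,\infty}$ regularity of $V_{\rm in},V_{\rm out}$ ensures that the corresponding extension of $w$ is regular across each side. Each corner then becomes an interior point of a locally smooth domain on which standard Stokes regularity applies, as in \cite[Thm.~2.1]{MurSim74}. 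Patching the local bounds yields \eqref{reg-est}, with the constant $C_h$ depending on $h$ through the Bogovskii correction on the narrow strip above $B_h$ and through the reflection-and-patching procedure near $\partial R$.
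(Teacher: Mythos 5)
Your overall architecture (explicit solenoidal extension, energy estimate with absorption, uniqueness by the standard difference argument, regularity by reflection across the flat sides of $R$) is the same as the paper's, and the uniqueness and regularity steps are essentially fine. However, there are two genuine gaps.

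First, the construction of $w$ does not deliver the bounds you claim for it. The net flux $\lambda\int_{-H}^H V_{\rm in}$ carried by your global Poiseuille-like field must be rerouted around $B_h$ through one of the two gaps, and a Bogovskii correction on a thin gap has an operator norm that degenerates with the aspect ratio, so asserting that it produces exactly $\eps_t(h)^{-3/2}$ (and nothing in $\eps_b(h)$) requires proof. More seriously, you route the flux through a ``shear layer above $B_h$'' in all cases: for $U=1$ this is indeed the source of $\eps_t(h)^{-3/2}$, but for $U=0$ it forces $\|w\|_{H^1(\Omega_h)}$ to blow up as $B_h\to\Gamma_t$, which contradicts your claim (and the theorem's assertion) that for $U=0$ the bound and hence $\Lambda$ are $h$-independent. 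The paper's extension is fully explicit, $s=-\lambda\nabla^\perp\bigl(\zeta_l\int_{-H}^{x_2}V_{\rm in}+\zeta_r\int_{-H}^{x_2}V_{\rm out}\bigr)$ with cut-offs whose $x_2$-dependence is confined to a strip of width $\eps_t(h)/2$, and, crucially, for $U=0$ it uses two different constructions according to the sign of $h$, routing the flux through the bottom gap when the body is near $\Gamma_t$. Some such case distinction is unavoidable.

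Second, existence for arbitrary $\lambda\geq0$. Your energy estimate absorbs $\int_{\Omega_h}v\cdot\nabla w\cdot v\leq C\|\nabla w\|_{L^2}\|\nabla v\|_{L^2}^2$ only when $\|\nabla w\|_{L^2}$ is small, i.e.\ for small $\lambda$; for large $\lambda$ this yields no a priori bound at all, so the invoked Leray--Schauder argument has nothing to close on. The classical remedy, and the one the paper uses, is to prove existence with Hopf's extension, supported in a boundary layer so thin that $\bigl|\int v\cdot\nabla s\cdot v\bigr|\leq\eps\|\nabla v\|_{L^2}^2$ for arbitrarily small $\eps$ independently of $\lambda$, and to reserve the ad-hoc extension for the refined bound \eqref{energy-est} in the small-$\lambda$ regime. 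A minor further point: even for small $\lambda$, absorbing $C\|\nabla w\|_{L^2}\|\nabla v\|_{L^2}^2$ directly imposes the restrictive condition $\lambda\lesssim\eps_t(h)^{3/2}$; the paper avoids this by integrating by parts and pairing the Poincar\'e constant $\eps_t(h)/2$ on the thin strip with the $L^\infty$ bound $\eps_t(h)^{-1}$ of $s$ there.
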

Before giving the proof, let us explain qualitatively the main differences between the cases $U=0$ and $U=1$. 
	For $U=0$, the a priori bound \eqref{energy-est} is \textit{independent} of $h$, so that the graph of $\Lambda(h)$ looks like Figure \ref{Lambdone} (left). For $U=1$, \eqref{energy-est} depends on $h$ and $\Lambda(h)$ itself may depend on $h$, see Figure \ref{Lambdone} (right) and \eqref{smallness} below.
		\begin{figure}[!h]
		\includegraphics[height=4cm]{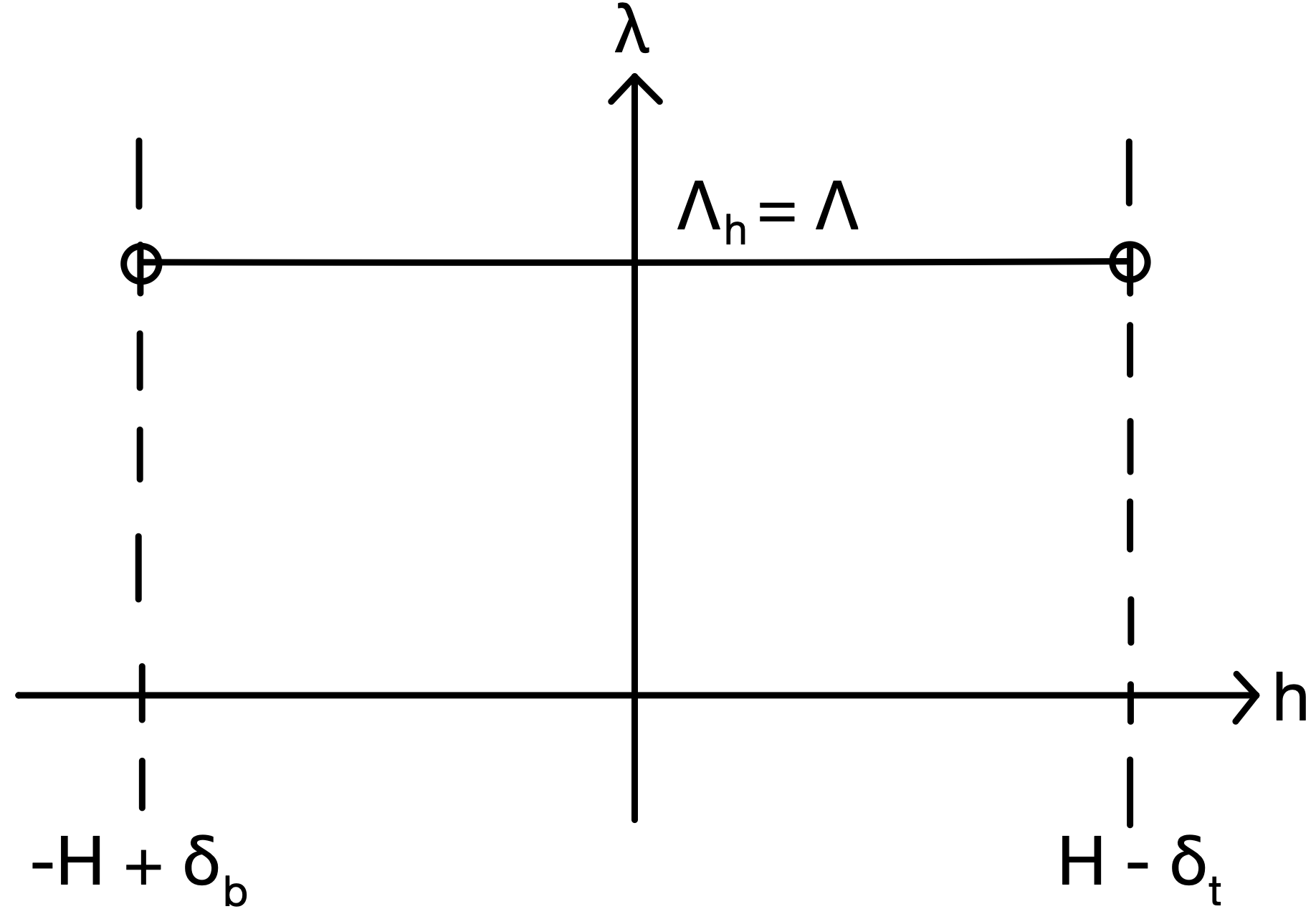} \qquad
		\includegraphics[height=4cm]{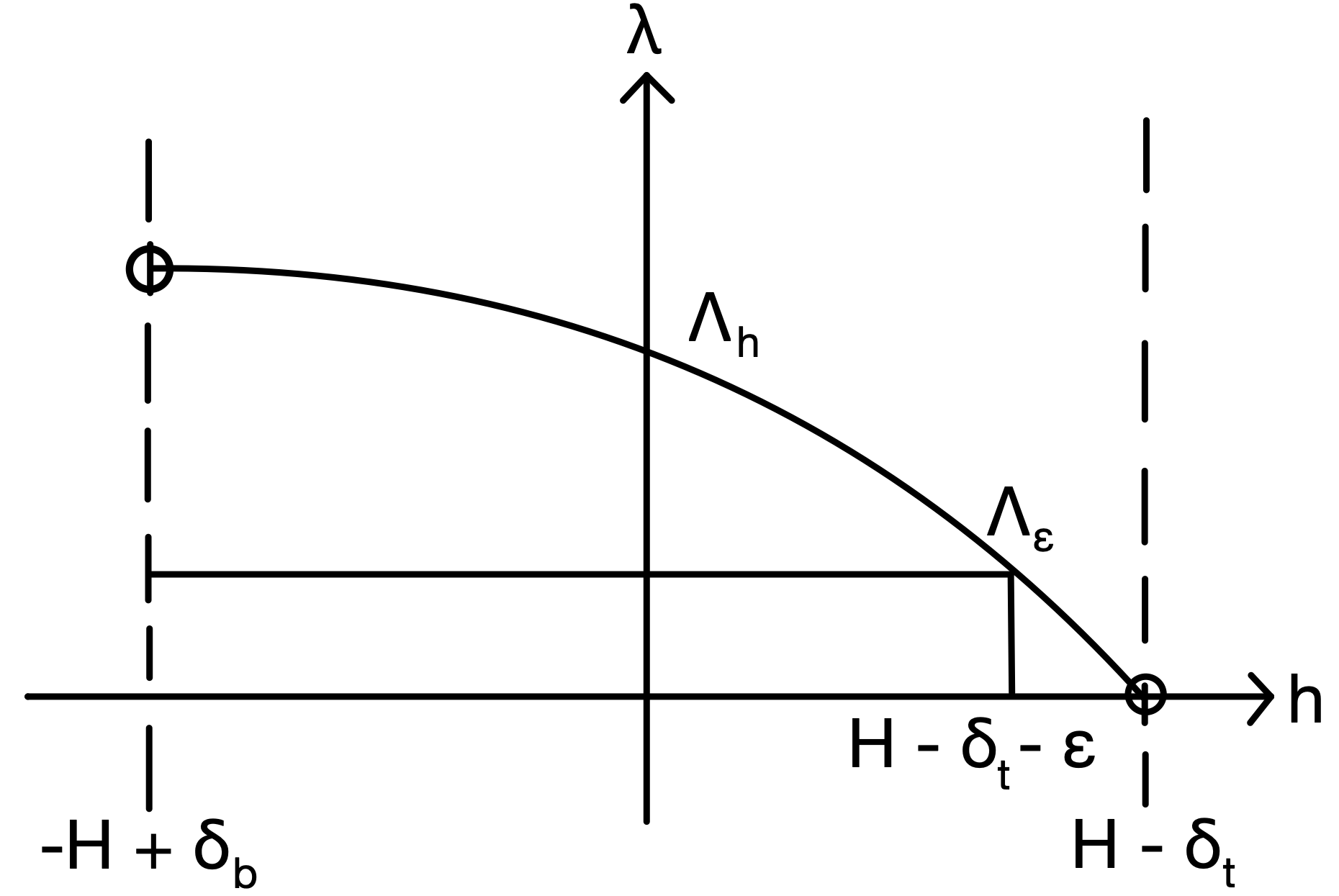}
		\caption{Qualitative behavior of $\Lambda=\Lambda(h)$ for $U=0$ (left) and $U=1$ (right).}
		\label{Lambdone}
	\end{figure}

\begin{proof}
	{\it Existence of weak solutions.} For later use, we first define weak solution for the forced Navier-Stokes equations
	\begin{equation}\label{forcedNS}
-\mu\Delta u + u\cdot \nabla u + \nabla p = f,\qquad\nabla \cdot u =0\quad\mbox{in}\quad \Omega_h,
	\end{equation}
	which reduces to \eqref{BVprob} when $f=0$. We say that $u\in H^1(\Omega_h)$ is a weak solution to \eqref{forcedNS} with $f\in L^2(\Omega_h)$  if
	$u$ is a solenoidal vector field satisfying the boundary conditions in the trace sense and
	\begin{equation}\label{weak-BV}
	\mu\int_{\Omega_h} \nabla u : \nabla \varphi +\int_{\Omega_h} u \cdot \nabla u \cdot \varphi =\int_{\Omega_h}f\cdot \varphi\\[2pt]
	\end{equation}
	for all $\varphi\in W(\Omega_h):=\{\varphi\in H^1_0(\Omega_h) : \nabla \cdot \varphi=0 \ \mbox{a.e. in} \ \Omega_h\}$. For any weak solution $u$, there exists a unique associated $p\in L^2_0(\Omega_h)$ (\textit{i.e.} with zero mean value), satisfying
	\begin{equation}\label{weak-pressure}
	\mu\int_{\Omega_h}\nabla u : \nabla \psi +\int_{\Omega_h} u\cdot \nabla u\cdot \psi -\int_{\Omega_h} p \nabla \cdot \psi =\int_{\Omega_h}f\cdot \psi\\[2pt]
	\end{equation}for all $\psi\in H^1_0(\Omega_h)$ (Lemma IX.1.2, \cite{Galdi-steady}).
	In \eqref{solext} below, we introduce an ad-hoc solenoidal extension matching our geometric framework which is not optimal for our current purpose. This is why we use here the well-known Hopf's extension $s$ that reduces the effect of the nonlinearity and allows to prove existence for any $\lambda \geq 0$. Hence, we recast \eqref{BVprob} as \eqref{forcedNS} with homogeneous boundary conditions, namely
	\begin{equation}\label{homo-prob}
	-\mu\Delta v + v\cdot \nabla v +\nabla p  =f,\quad
	\nabla \cdot v =0\quad\mbox{in}\quad \Omega_h,\qquad v_{|_{\partial \Omega_h}}=0,\\[2pt]
	\end{equation}
	where $f=\mu\Delta s -s\cdot \nabla v - v\cdot \nabla s-s\cdot \nabla s $. Then there exists $v\in W(\Omega_h)$ satisfying \eqref{weak-BV} for any $\lambda\geq 0$ (Theorem IX.4.1, \cite{Galdi-steady}). This is equivalent to say that the vector field $u=v+s\in H^1(\Omega_h)$ and the associated pressure $p\in L^2(\Omega_h)$ satisfy \eqref{weak-BV}-\eqref{weak-pressure} with $f=0$. Moreover, $\nabla \cdot u=0$, $u_{|_{\partial \Omega_h}}= s_{|_{\partial \Omega_h}}$ and
	\begin{equation}\label{energy-est-gen}\begin{aligned}
	\|u\|_{H^1(\Omega_h)}&\leq C(\|\nabla v \|_{L^2(\Omega_h)}+\|s\|_{H^1(\Omega_h)})\\[2pt]& \leq C((1+ \tfrac{1}{\mu})\|s\|_{H^1(\Omega_h)}+\tfrac{1}{\mu}\|s\|^2_{H^1(\Omega_h)})\leq C_h(\lambda + \lambda^2)\, ,\\[2pt]
	\end{aligned}
	\end{equation}
	\begin{equation}\label{L2pressure-est}
	\|p\|_{L^2(\Omega_h)}\leq C (\mu \| u\|_{H^1(\Omega_h)}  + \| u\|^2_{H^1(\Omega_h)})\leq C_h (\lambda+\lambda^4).\\[2pt]
	\end{equation}
In these bounds and the ones below we only emphasize the smallest and largest powers of $\lambda$, as for any polynomial. These bounds are not part of the statement but they will be used later in the present proof.\\	
	{\it Regularity.} We claim that any weak solution $(u,p)$ to \eqref{BVprob} satisfies $(u,p)\in H^2(\Omega_h)\times H^1(\Omega_h)$. This would
be straightforward if $\Omega_h\in  W^{2,\infty} $, see \cite{MurSim74}, but $ R$ is only Lipschitzian. Here, we take advantage of the particular shape of $R$ and use a reflection argument as in \cite{secchi}. We construct a new domain $\Omega^t_h=R^t\setminus B^t_h$,
	obtained by reflecting $\Omega_h$ across $\Gamma_t$, where $R^t=(-L,L)\times [H,3H)$ and $B^t_h$ is the reflection of $B_h$ with respect to $\Gamma_t$. Define $(u^t, p^t) : \Omega^t_h \rightarrow \mathbb{R}^2\times \mathbb{R} $ by
	\begin{equation*}\begin{aligned}
	&u_1^t(x_1,H+x_2)= u_1(x_1, H-x_2), \quad 	&u_2^t(x_1,H+x_2)= -u_2(x_1, H-x_2),\\
	& p^t(x_1, H+x_2)= p(x_1,H-x_2)\quad  &\mbox{for all }(x_1,x_2)\in (-L,L)\times [0,2H)\\[2pt]
	\end{aligned}
	\end{equation*} which satisfies
	\begin{equation}\label{NSpb}
	-\mu\Delta u^t+ u^t\cdot \nabla  u^t + \nabla  p^t =  0,\quad\nabla \cdot u^t =0\quad\mbox{in}\quad \Omega^t_h.\\[2pt]
	\end{equation} Therefore, the couple $$(\overline{u},\overline{p})=\begin{cases}
		(u,p) \ &\mbox{in} \quad \Omega_h,\\[2pt]
		(u^t,p^t) \ &\mbox{in} \quad  \Omega^t_h,
	\end{cases}
$$satisfies the Navier-Stokes equations
	\begin{equation*}
-\mu\Delta \overline{u}+ \overline{u}\cdot \nabla \overline{u} + \nabla  \overline{p} =  0,\quad\nabla \cdot \overline{u} =0\quad\mbox{in}\quad \big\{(-L,L)\times (-H,3H)\big\} \setminus \{B_h \cup B_h^t\} .\\[2pt]
\end{equation*}
	Similarly, let $\Omega^b_h=R^b\setminus B^b_h$ with $R^b=(-L,L)\times (-3H,-H]$ and $B^b_h$ is the reflection of $B_h$ with respect to $\Gamma_b$. Define  $(u^b, p^b) : \Omega^b_h \rightarrow \mathbb{R}^2\times \mathbb{R} $ by
	\begin{equation*}\begin{aligned}
	&u_1^b(x_1,-H-x_2)= u_1(x_1, -H+x_2), \quad \ 	u_2^b(x_1,-H-x_2)= -u_2(x_1, -H+x_2),\\
	& p^b(x_1, -H-x_2)= p(x_1,-H+x_2)   \qquad\ \mbox{for all }(x_1,x_2)\in (-L,L)\times [0,2H)\\[2pt]
	\end{aligned}
	\end{equation*}
	which satisfies the corresponding of \eqref{NSpb} in $\Omega^b_h$. Thanks to these two vertical reflections, we obtain a solution in $\Omega^s_h=\big\{(-L,L)\times (-3H,3H)\big\} \setminus \{B_h \cup B_h^t\cup B_h^b\}$.\\
	With the same principle, we then perform two horizontal reflections of $\Omega^s_h$ with respect to $x_1=\pm L$. At the end of this procedure, let
	$$\widetilde{\Omega}_h=\big\{(-3L,3L)\times (-3H,3H)\big \} \setminus \{B_h \mbox{ and its eight reflections}\}$$ and $(\widetilde{u}, \widetilde{p}): \widetilde{\Omega}_h \rightarrow \mathbb{R}^2\times\mathbb{R}$ be the extension of $(u,p)$, so that
	\begin{equation}\label{NSpb-reflection}
	-\mu\Delta \widetilde{u}+ \widetilde{u}\cdot \nabla  \widetilde{u} + \nabla  \widetilde{p} =  0,\quad
	\nabla \cdot \widetilde{u}=0\quad\mbox{in}\quad \widetilde{\Omega}_h,\qquad
	\widetilde{u}_{|_{\partial B_h}}=0\\[2pt]
	\end{equation}
	and $\tilde{u}$ satisfies further boundary conditions that we do not need to make explicit. After introducing a suitable solenoidal extension, we can proceed as in the first part of the proof and obtain the existence of a solution $(\widetilde{u}, \widetilde{p})\in H^1(\widetilde{\Omega}_h)\times L^2(\widetilde{\Omega}_h)$ satisfying the bounds \eqref{energy-est-gen}-\eqref{L2pressure-est}.
	Hence, $\widetilde{u}\cdot \nabla \widetilde{u} \in L^{3/2}(\widetilde{\Omega}_h)$ and
	\begin{equation}\begin{aligned}\label{estL3/2}
	\|\widetilde{u}\cdot \nabla \widetilde{u} \|_{L^{3/2}(\widetilde{\Omega}_h)}\leq \|\widetilde{u}\|_{L^6(\widetilde{\Omega}_h)} \|\nabla \widetilde{u} \|_{L^2(\widetilde{\Omega}_h)}\leq C\|\widetilde{u}\|^2_{H^1(\widetilde{\Omega}_h)} \leq C_h(\lambda^2 + \lambda^4)\\[2pt]
	\end{aligned}
	\end{equation}
with $C_h=C(\widetilde{\Omega}_h)$. By applying \cite{MurSim74}  and \cite[Theorems IV.4.1 and IV.5.1]{Galdi-steady} to the Stokes problem \eqref{NSpb-reflection}, we infer that
	$(\widetilde{u}, \widetilde{p})\in W^{2,3/2}(\Omega')\times W^{1,3/2}(\Omega')$ for any $ \Omega'\subset \widetilde{\Omega}_h$ and
	\begin{equation}\begin{aligned}\label{estW2-3/2}
	&\|\widetilde{u}\|_{W^{2,3/2}(\Omega')} + \|\widetilde{p}\|_{W^{1,3/2}(\Omega')} \\[2pt]&\leq C_h(\|\widetilde{u}\cdot \nabla \widetilde{u} \|_{L^{3/2}(\widetilde{\Omega}_h)} +	\|\widetilde{u}\|_{W^{1,3/2}(\widetilde{\Omega}_h)}+ \|\widetilde{p}\|_{L^{3/2}(\widetilde{\Omega}_h)})\leq C_h(\lambda + \lambda^4)\\[2pt]
	\end{aligned}
	\end{equation}
with $C_h=C( \Omega', \widetilde{\Omega}_h)$. We recall that $(\widetilde{u}, \widetilde{p})=(u,p)$ in $\Omega_h$. Then, using Sobolev embedding $W^{2,3/2}\hookrightarrow W^{1,6}$ in $\mathbb{R}^2$ and a bootstrap argument we obtain that $(u,p)\in H^2(\Omega_h)\times H^1(\Omega_h)$. Moreover, from \eqref{estL3/2}-\eqref{estW2-3/2} we get
	\begin{equation*}\begin{aligned}
	\|u\|_{H^2(\Omega_h)} + \|p\|_{H^1(\Omega_h)} &\leq C_h(\|\widetilde{u}\cdot \nabla \widetilde{u} \|_{L^{2}( \Omega')} +\|\widetilde{u}\|_{H^1(\Omega')} + \|\widetilde{p}\|_{L^2(\Omega')}  )\\[2pt]&\leq C_h( \|\widetilde{u}\|_{L^3(\Omega')}\|\nabla \widetilde{u}\|_{L^6(\Omega')}  +\|\widetilde{u}\|_{H^1(\Omega')} + \|\widetilde{p}\|_{L^2(\Omega')})\\[2pt]
	&\leq C_h(\|\widetilde{u}\|_{H^1(\Omega')}\|\widetilde{u}\|_{W^{2,3/2}(\Omega')}+\|\widetilde{u}\|_{H^1(\Omega')} + \|\widetilde{p}\|_{L^2(\Omega')})\\[2pt]
	&\leq C_h(\lambda+\lambda^4)\\[2pt]
	\end{aligned}
	\end{equation*}
	with $C_h=C( \Omega_h, \widetilde{\Omega}_h)$. This also proves \eqref{reg-est} whenever $\lambda < \Lambda(h)$.\\[5pt]
	{\it Uniqueness.} Let $u_1$ and $u_2$ be two weak solutions to \eqref{BVprob}, let $w=u_1-u_2$, then
	$$
	\mu\int_{\Omega_h}\nabla w: \nabla \varphi + \int_{\Omega_h}w\cdot \nabla w \cdot \varphi=- \int_{\Omega_h}\left(w\cdot\nabla u_2 + u_2\cdot\nabla w \right)\cdot\varphi
	$$
	for all $\varphi\in W(\Omega_h)$. Then take $\varphi= w$ so that the latter yields
	\begin{equation}\begin{aligned}
	\mu \|\nabla w \|_{L^2(\Omega_h)}^2\! &= - \int_{\Omega_h}\!\! w\cdot \nabla u_2 \cdot w
	\leq \| \nabla u_2\|_{L^2(\Omega_h)}\| w\|_{L^4(\Omega_h)}^2 \\[2pt]&\leq C_h(1+\tfrac{1}{\mu}) (\lambda+ \lambda^2)\| \nabla w\|_{L^2(\Omega_h)}^2,\\[2pt]
	\end{aligned}
	\end{equation}
	where we used H\"older, Ladyzhenskaya and Poincaré inequalities and \eqref{energy-est-gen}.
	Hence, there exists $\Lambda=\Lambda(h)>0$ (uniformly upper-bounded with respect to $h$) such that
	\begin{equation}\label{smallness}
	\lambda\in[0,\Lambda(h))\ \Longleftrightarrow\
	C_h(1+\tfrac{1}{\mu})(\lambda+\lambda^2)<\mu\\[2pt]
	\end{equation}
	and this condition implies $\|\nabla w\|_{L^2(\Omega_h)}=0$ and, in turn, $w=0$ since $w_{|_{\partial \Omega_h}}=0$.\\[5pt]
	{\it Refined bounds.} For $\lambda\in[0,\Lambda(h))$, in all the above bounds we can drop the largest power of $\lambda$ and they all become linear upper bounds. We treat separately the cases $U=1$ and $U=0$ and we make explicit the dependence of the constant $C_h$ in \eqref{energy-est-gen} on $h$.\\
	When $U=1$, we claim that the unique strong solution $u$ to \eqref{BVprob} satisfies
	\begin{equation}\label{energy-est-eps}
	\| u\|_{H^1(\Omega_h)}  \leq C\big(1+(\eps_t(h))^{-3/2}\big)\lambda\\[2pt]
	\end{equation}
	with $C>0$ independent of $h$.
	To this end, we introduce a different (and explicit) solenoidal extension.
	Consider the cut-off functions $\zeta_l,\zeta_r\in C^\infty(\mathbb{R}^2)$, with $0\leq \zeta_{ l}, \zeta_r\leq 1$, defined piece-wise in the rectangles of Figure \ref{cutoffsU=1} by
	\begin{equation}\label{zeta-left}
	\zeta_l(x_1,x_2)=\begin{cases}
	0 \quad&\mbox{in} \quad  [-\tau,\tau]\times [-H, H -\tfrac{\eps_t(h) }{2}],  \\[2pt]0 \quad&\mbox{in} \quad [\tau, L]\times[-H,H] ,\\[2pt]
	1 \quad&\mbox{in} \quad  [-L,-2\tau]\times [-H,H],\\[2pt]
	\zeta^0_l(x_1)\quad&\mbox{in}\quad [-2\tau,-\tau]\times [-H,H-\tfrac{\eps_t(h)}{2}],\\[2pt]
	C^\infty\mbox{-completion}\quad&\mbox{in} \quad [-2\tau,-\tau]\times [H-\tfrac{\eps_t(h)}{2},H],
	\end{cases}
	\end{equation}where $\zeta^0_l$ is a function only of $x_1$, and
	\begin{equation}\label{zeta-right}
	\zeta_r(x_1,x_2)=\begin{cases}
	0 \quad&\mbox{in} \quad [-\tau,\tau]\times [-H, H -\tfrac{\eps_t(h)}{2}],  \\[2pt]0 \quad&\mbox{in} \quad [-L, -\tau]\times[-H,H-\tfrac{ \eps_t(h)}{4}] ,\\[2pt]
	1\quad&\mbox{in} \quad[2\tau, L]\times[-H,H-\tfrac{ \eps_t(h)}{4}],\\[2pt]
	\zeta^0_r(x_1)\quad&\mbox{in}\quad [\tau,2\tau]\times [-H,H-\tfrac{ \eps_t(h)}{2}],\\[2pt]
		1-\zeta_l(x_1,x_2)& \mbox{in}  \quad  [-L,L]\times [H -\tfrac{\eps_t(h)}{4},H ],\\[2pt]
	C^\infty\mbox{-completion}	\quad&\mbox{in}\quad [-\tau,2\tau]\times [H-\tfrac{ \eps_t(h)}{2}, H-\tfrac{ \eps_t(h)}{4}],\\[2pt]
	\end{cases}
	\end{equation}
	where $\zeta^0_r$ is a function only of $x_1$.
	\begin{figure}[!h]
		\begin{center}
			\includegraphics[scale=0.13]{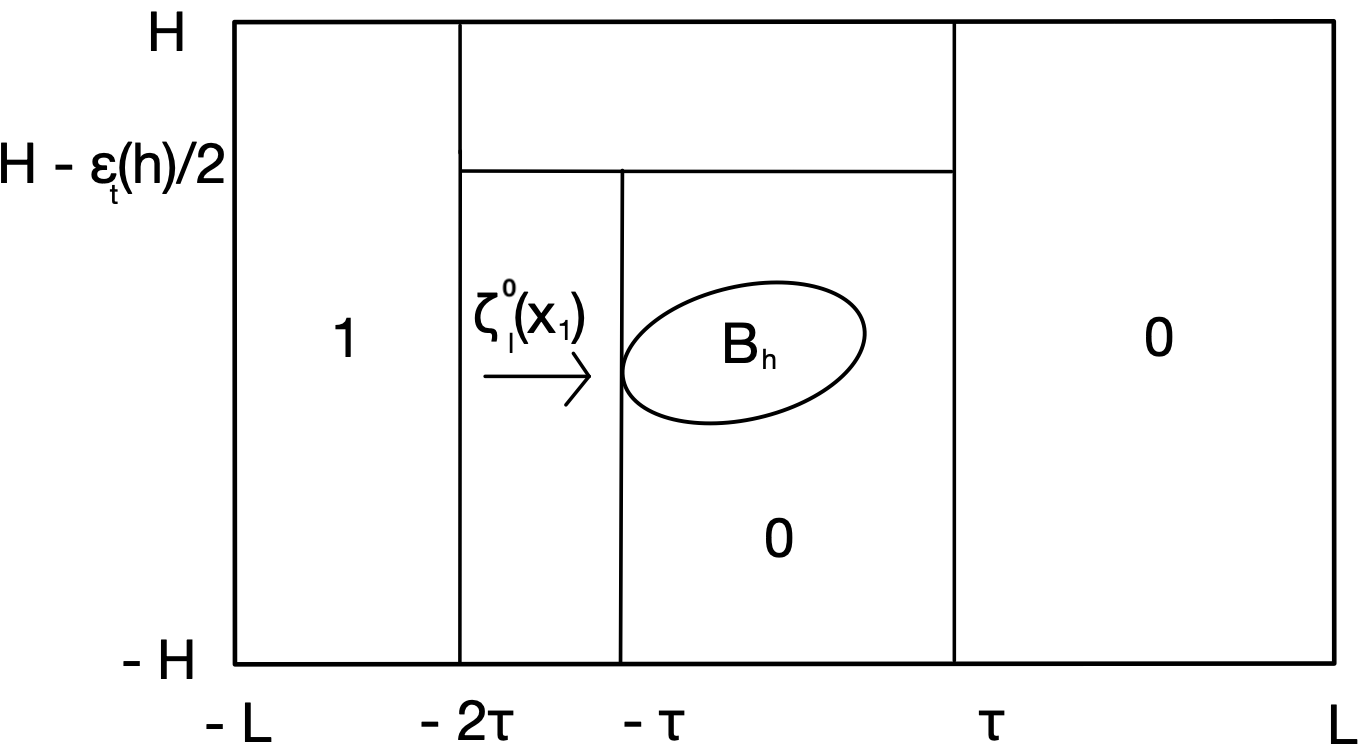}\includegraphics[scale=0.13]{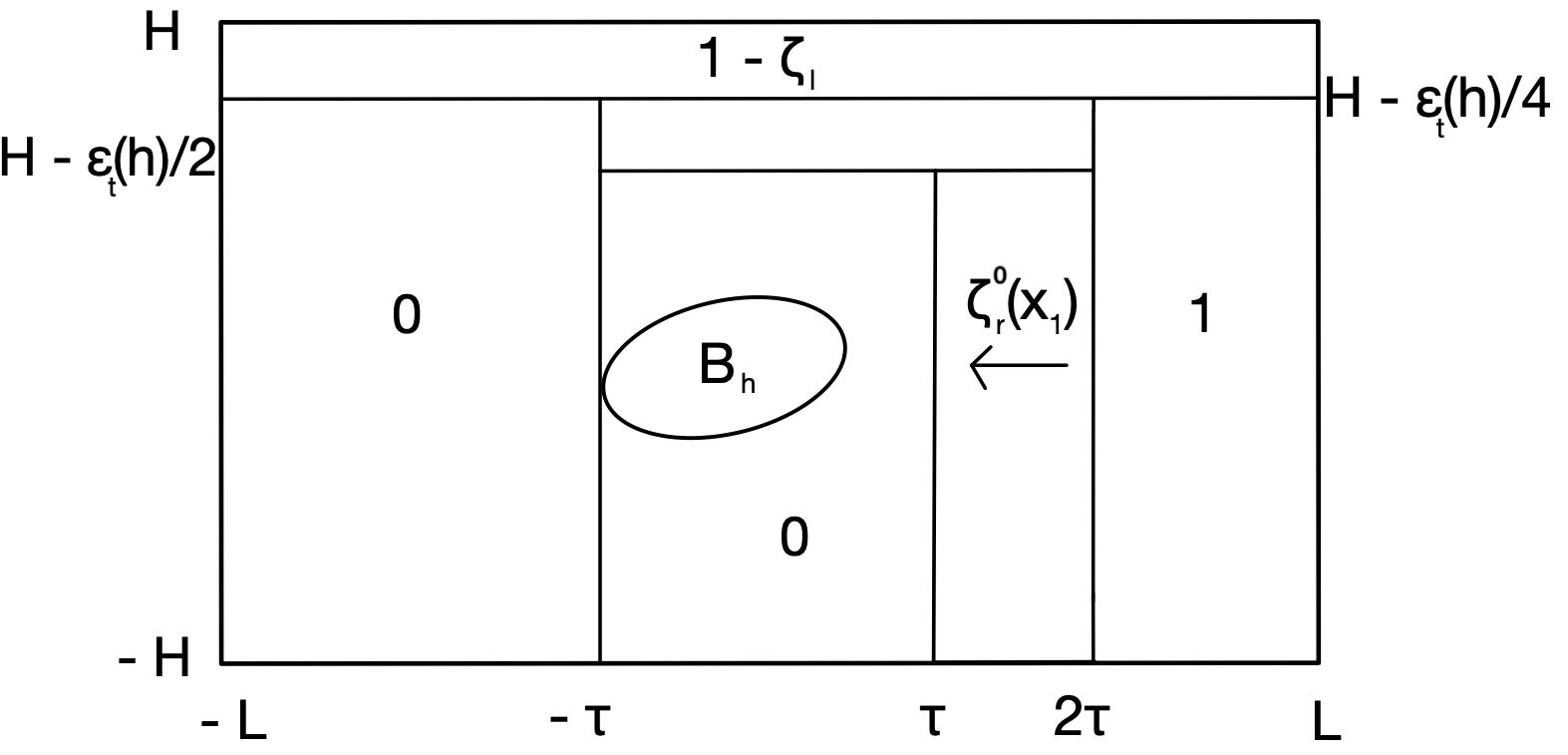}
			\caption{The cut-off functions $\zeta_l$ (left) and $\zeta_r$ (right) on $\overline{ R}$ when $U=1$.}
			\label{cutoffsU=1}
		\end{center}
	\end{figure}

	Then, letting $\nabla^\perp=(-\partial_2,\partial_1)$, consider the vector field $s: R\rightarrow \mathbb{R}^2$ defined by	
	\begin{equation}\label{solext}\begin{aligned}
	s(x_1,x_2):=-\lambda\nabla^\perp\left(\zeta_l(x_1,x_2)\int_{-H}^{x_2} V_{\rm in}(z)dz +  \zeta_r(x_1,x_2)\int_{-H}^{x_2} V_{\rm out}(z)dz\right),\\[2pt]
	\end{aligned}
	\end{equation}
	which is solenoidal and satisfies the boundary conditions in \eqref{BVprob}. Rewriting $s$ as
	\begin{equation*}
	\begin{aligned}
	s(x_1,x_2)=  \lambda\left(-\nabla^\perp \zeta_l \int_{-H}^{x_2}V_{\rm in} - \nabla^\perp \zeta_r  \int_{-H}^{x_2}V_{\rm out}+ (\zeta_l V_{\rm in} + \zeta_r V_{\rm out}) e_1\right),\\[2pt]
	\end{aligned}
	\end{equation*}
	its partial derivatives read
	\begin{equation*}
	\begin{aligned}
	\partial_1 s= \lambda \left(-\nabla^\perp \partial_1 \zeta_l \int_{-H}^{x_2}V_{\rm in} - \nabla^\perp \partial_1\zeta_r \int_{-H}^{x_2}V_{\rm out} + (\partial_1 \zeta_l V_{\rm in} + \partial_1 \zeta_r V_{\rm out})e_1\right)\, ,\\[2pt]
	\end{aligned}
	\end{equation*}
	\begin{equation*}
	\begin{aligned}
	\partial_2 s= \lambda \bigg( & -\nabla^\perp \partial_2 \zeta_l \int_{-H}^{x_2}V_{\rm in} - \nabla^\perp \partial_2\zeta_r \int_{-H}^{x_2}V_{\rm out} - \nabla^\perp\zeta_l V_{\rm in} -\nabla^\perp\zeta_r V_{\rm out} \\[2pt]& + (\partial_2 \zeta_l V_{\rm in} + \partial_2 \zeta_r V_{\rm out} +\zeta_l \tfrac{d}{dx_2}V_{\rm in} +  \zeta_r \tfrac{d}{dx_2}V_{\rm out})e_1\bigg).\\[2pt]
	\end{aligned}
	\end{equation*}
	Using that $V_{\rm in},V_{\rm out}\in W^{2,\infty}(-H,H)$ and that $\zeta_l,\zeta_r$ are smooth, it follows that
	\begin{equation}\label{implicit-solext-est}
	\begin{aligned}
	\|s\|_{L^\infty(\Omega_h)}, \|s\|_{L^2(\Omega_h)}, \ &\|s\|_{L^4(\Omega_h)} , \ \|\nabla s\|_{L^2(\Omega_h)}, \
	\|\Delta s\|_{L^2(\Omega_h)}
\leq  C_h\lambda,\\[2pt]
	&\|s\cdot \nabla s\|_{L^2(\Omega_h)}\leq  C_h\lambda^2\le C_h\lambda.\\[2pt]
	\end{aligned}
	\end{equation}
	 We need to quantify the dependence of $C_h>0$ on $\eps_b(h)$ and $\eps_t(h)$. On the one hand, we notice that, by construction, both $\zeta_l$ and $\zeta_r$ depend on $x_2$ only in \begin{equation}\label{Omegaeps1}\Omega_{\eps_t(h)}:=[-2\tau, 2\tau] \times [H-\tfrac{\eps_t(h)}{2},H].\\[2pt]\end{equation}
	In this domain the $x_1$-derivatives of $\zeta_{l}$ and $\zeta_r$ are uniformly bounded with respect to $h$ while the $x_2$-derivatives blow-up as $\eps_t(h)$ goes to zero, for instance we have
	$$|\partial_2 \zeta_{l}|, |\partial_2 \zeta_{r}|\leq C (\eps_t(h))^{-1},\qquad |\partial^2_2 \zeta_{r}|, |\partial^2_2 \zeta_{l}|\leq C (\eps_t(h))^{-2}. $$
	Therefore, in $\Omega_{\eps_t(h)}$
	\begin{equation*}
	\begin{aligned}
	|s|\leq C (1+&(\eps_t(h))^{-1})\lambda, \quad |\partial_1 s|\leq C (1+(\eps_t(h))^{-1})\lambda,\\& |\partial_2 s|\leq C ( (\eps_t(h))^{-1}+(\eps_t(h))^{-2})\lambda.
	\end{aligned}
	\end{equation*}
	On the other hand, the cut-off functions depend only on $x_1$  in $\Omega_h\setminus \Omega_{\eps_t(h)}$ and their $x_1$ and $x_2$-derivatives are uniformly bounded with respect to $h$. Therefore, in $\Omega_h\setminus \Omega_{\eps_t(h)}$
	\begin{equation*}
	|s|, |\partial_1 s|, |\partial_2 s|\leq C \lambda.
	\end{equation*}
	Gathering all together, we refine the bounds in \eqref{implicit-solext-est} as
	\begin{equation}\begin{aligned}\label{solext-eps1}
&	\| s\|_{L^\infty(\Omega_h)}\leq C (1+(\eps_t(h))^{-1})\lambda,\\[2pt]
&	\|s\|_{L^2(\Omega_h)}
	\leq C\lambda +
	C \left(\int_{\Omega_{\eps_t(h)}}( \eps_t(h))^{-2}\right)^{1/2}\lambda \leq  C (1+ (\eps_t(h))^{-1/2})\lambda,\\
	&\|s\|_{L^4(\Omega_h)}\leq C (1+(\eps_t(h))^{-3/4})\lambda,\qquad \|\nabla s\|_{L^2(\Omega_h)} \leq C (1+(\eps_t(h))^{-3/2})\lambda,\\[2pt]
		&\|\Delta s\|_{L^2(\Omega_h)}, \ \| s\cdot \nabla s\|_{L^2(\Omega_h)}\leq C (1+(\eps_t(h))^{-5/2})\lambda,\\[2pt]
	\end{aligned}
	\end{equation}with all the constants $C>0$ independent of $h$.
	Then, testing \eqref{homo-prob} with $v=u-s$ we obtain
	\begin{equation}\label{ident-prov2}
	\begin{aligned}
	\mu\|\nabla v\|_{L^2(\Omega_h)}^2= - \int_{\Omega_h} v\cdot \nabla s\cdot v -\int_{\Omega_h}s\cdot \nabla s \cdot v  - \mu \int_{\Omega_h}\nabla s : \nabla v\\[2pt]
	\end{aligned}
	\end{equation}
	We want to estimate, when possible, only $s$ and not $\nabla s$ since the bounds for $s$ are less singular in terms of $\eps_t(h)$. Hence, since $\nabla\cdot v=\nabla\cdot s=0$ and using integration by parts, we rewrite \eqref{ident-prov2} as
	\begin{equation}\label{ident-prov3}
	\begin{aligned}
	\mu\|\nabla v\|_{L^2(\Omega_h)}^2=& \int_{\Omega_h} v\cdot \nabla v\cdot s +\int_{\Omega_h}s\cdot \nabla v \cdot s  - \mu \int_{\Omega_h}\nabla s : \nabla v.\\[2pt]
	\end{aligned}
	\end{equation}
	We split the first integral in the right-hand side over $\Omega_{\eps_t(h)}$ and $\Omega_h \setminus\Omega_{\eps_t(h)}$. On the one hand, since $v_{|_{\Gamma_t}}=0$, Poincaré inequality
	\begin{equation*}
	\|v\|_{L^2{(\Omega_{\eps_t(h)}})}\leq \frac{\eps_t(h) }{2}\|\nabla v\|_{L^2{(\Omega_{\eps_t(h)}})},\\[2pt]
	\end{equation*}
and H\"older inequality yield
	\begin{equation*}\begin{aligned}
	\int_{\Omega_{\eps_t(h)}}( v \cdot \nabla v )\cdot s &\leq \|v\|_{L^2{(\Omega_{\eps_t(h)}})}	\|\nabla v\|_{L^2{(\Omega_{\eps_t(h)}})}	\|s\|_{L^\infty{(\Omega_{\eps_t(h)}})}\\
	&\leq C\eps_t(h)  \|\nabla v\|^2_{L^2{(\Omega_{\eps_t(h)}})}  (1+ (\eps_t(h))^{-1})\lambda\leq C \lambda   \|\nabla v\|^2_{L^2{(\Omega_{\eps_t(h)}})},\\[2pt]
	\end{aligned}
	\end{equation*}where we used that $\|s\|_{L^\infty(\Omega_{\eps_t(h)})}\leq C(1+ (\eps_t(h))^{-1})\lambda$ and $\eps_t(h)\leq 2H-\delta_b -\delta_t$. On the other hand, since $v_{|_{\Gamma_l, \Gamma_r}}=0$, Poincaré and H\"older inequalities yield
	\begin{equation*}\begin{aligned}
	\int_{\Omega_h \setminus\Omega_{\eps_t(h)}}( v \cdot \nabla v )\cdot s &\leq  \|v\|_{L^2(\Omega_h\setminus\Omega_{\eps_t(h)})}	\|\nabla v\|_{L^2{(\Omega_h \setminus\Omega_{\eps_t(h)})}}	\|s\|_{L^\infty(\Omega_h \setminus\Omega_{\eps_t(h)})}\\& \leq C\lambda \|\nabla v\|^2_{L^2{(\Omega_h \setminus\Omega_{\eps_t(h)})}},\\[2pt]
	\end{aligned}
	\end{equation*}where we used that $\|s\|_{L^\infty(\Omega_h \setminus\Omega_{\eps_t(h)})}\leq C\lambda$.
	Therefore, from \eqref{solext-eps1} and \eqref{ident-prov3} we infer
	\begin{equation*}\begin{aligned}
	\mu\|\nabla v\|^2_{L^2(\Omega_h)}&\leq C\lambda\|\nabla v\|^2_{L^2(\Omega_h)} + \|s\|_{L^4(\Omega_h)}^2 \|\nabla v\|_{L^2(\Omega_h)}  +\mu \|\nabla s\|_{L^2(\Omega_h)}\|\nabla v\|_{L^2(\Omega_h)}\\[2pt]
	&\leq C\lambda \|\nabla v\|^2_{L^2(\Omega_h)}+ C (1+( \eps_t(h))^{-3/2})(\lambda+\lambda^2)\|\nabla v\|_{L^2(\Omega_h)}.\\[2pt]
	\end{aligned}
	\end{equation*}
	Then, for $\lambda\in[0,\Lambda(h))$ with $\Lambda(h)$ as in \eqref{smallness} we have
	\begin{equation}\label{energy-est-esp-v}
	\begin{aligned}
	\|\nabla v\|_{L^2(\Omega_h)} \leq C(1+ (\eps_t(h))^{-3/2})\lambda\\[2pt]
	\end{aligned}
	\end{equation}and
	\begin{equation*}\begin{aligned}
	\|u\|_{H^1(\Omega_h)}\leq \|\nabla v\|_{L^2(\Omega_h)} + \|s\|_{H^1(\Omega_h)}\leq C(1+(\eps_t(h))^{-3/2})\lambda ,\\[2pt]
	\end{aligned}
	\end{equation*}which proves \eqref{energy-est-eps}.\\
	When $U=0$, we claim that the unique strong solution $u$ to \eqref{BVprob} satisfies
	\begin{equation}\label{energy-est-bound}
	\| u\|_{H^1(\Omega_h)}  \leq C\lambda\\[2pt]
	\end{equation}
	with $C>0$ independent of $h$, which will imply that $\Lambda(h)\equiv \Lambda$ can be also taken independent of $h$.
	In this case, we shall define the cut-off functions and the solenoidal extension differently depending  if $h\leq 0$ or $h>0$. If $h\leq 0$, we define $\zeta_l$, $\zeta_r$ as in \eqref{zeta-left}-\eqref{zeta-right}  (see Figure \ref{cutoffsU=0} below) replacing $\eps_t(h)$ with the distance of $B_0$ to $\Gamma_t$, namely $\eps_t(0)=H-\delta_t$. The solenoidal extension $s$ is then defined as in \eqref{solext}.
	By construction both $\zeta_l$ and $\zeta_r$ depend on $x_2$ only in $\Omega_{\eps_t(0)}$, defined as in \eqref{Omegaeps1} with $\eps_t(h)$ replaced by $\eps_t(0)$. In this domain both $x_1$ and $x_2$-derivatives of $\zeta_l$ and $\zeta_r$ are uniformly bounded with respect to $h$, for instance we have
	$$|\partial_2 \zeta_{l}|, |\partial_2 \zeta_{r}|\leq C( \eps_t(0))^{-1}\leq C,\qquad |\partial^2_2 \zeta_{r}|, |\partial^2_2 \zeta_{l}|\leq C (\eps_t(0))^{-2}\leq C. $$
	Since in $\Omega_h \setminus \Omega_{\eps_t(0)}$ the cut-off functions depend only on $x_1$,
	we infer that $s$, $\partial_1 s$ and $\partial_2 s$ are uniformly bounded with respect to $h$ in all $\Omega_h$ and
	\begin{equation}\label{s-bounded}
	\|s\|_{L^\infty(\Omega_h)}, 	\|s\|_{L^2(\Omega_h)}, 	\|s\|_{L^4(\Omega_h)},	\|\nabla s\|_{L^2(\Omega_h)} \leq C\lambda.\\[2pt]
	\end{equation}Repeating the same computations as in the case $U=1$ and using \eqref{s-bounded}, we obtain \eqref{energy-est-bound} for $h\leq0$.\\
		If $h>0$, we make a vertical reflection $x_2\mapsto -x_2$ and we consider the new cut-off functions defined piece-wise in the rectangles of Figure \ref{cutoffsU=0}, where $\eps_b(0)=H-\delta_b$.
	\begin{figure}[!h]
		\begin{center}
			\includegraphics[scale=0.13]{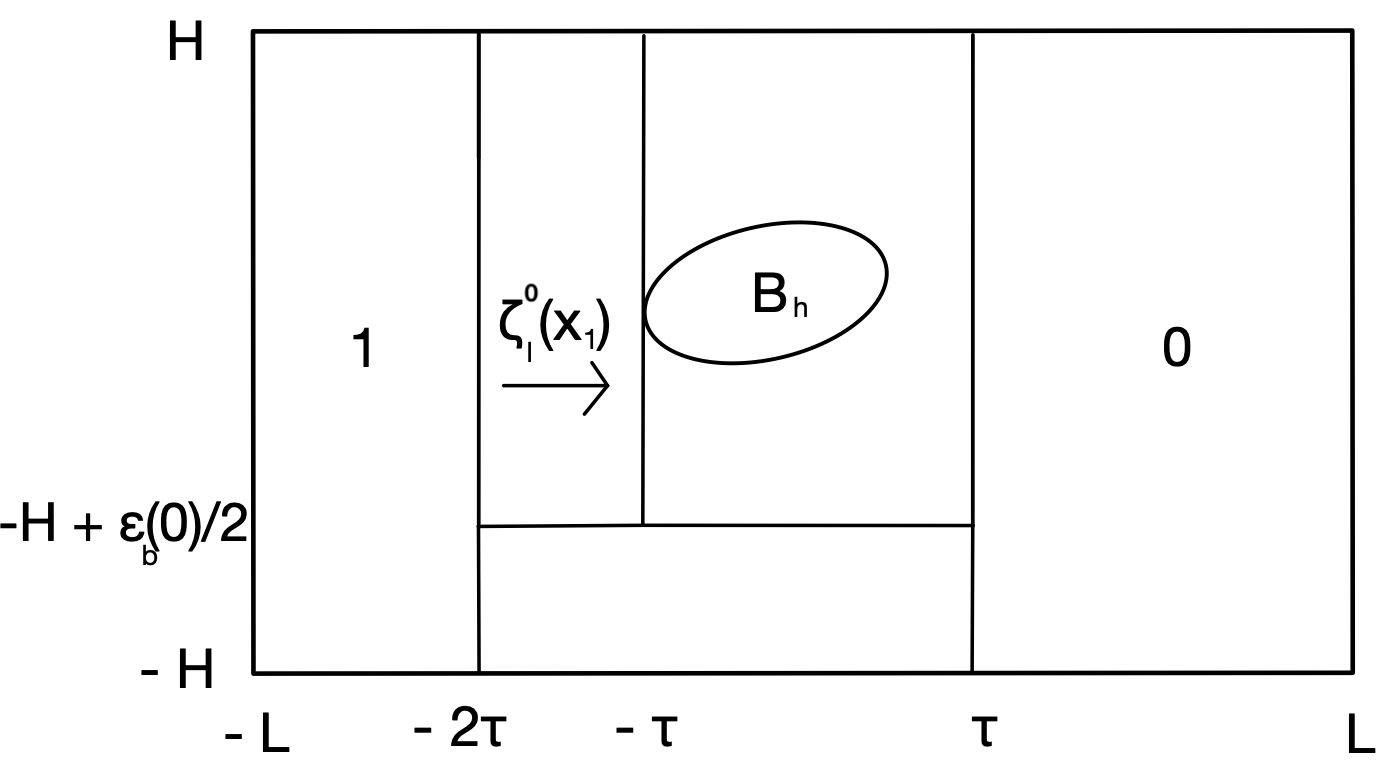}\includegraphics[scale=0.13]{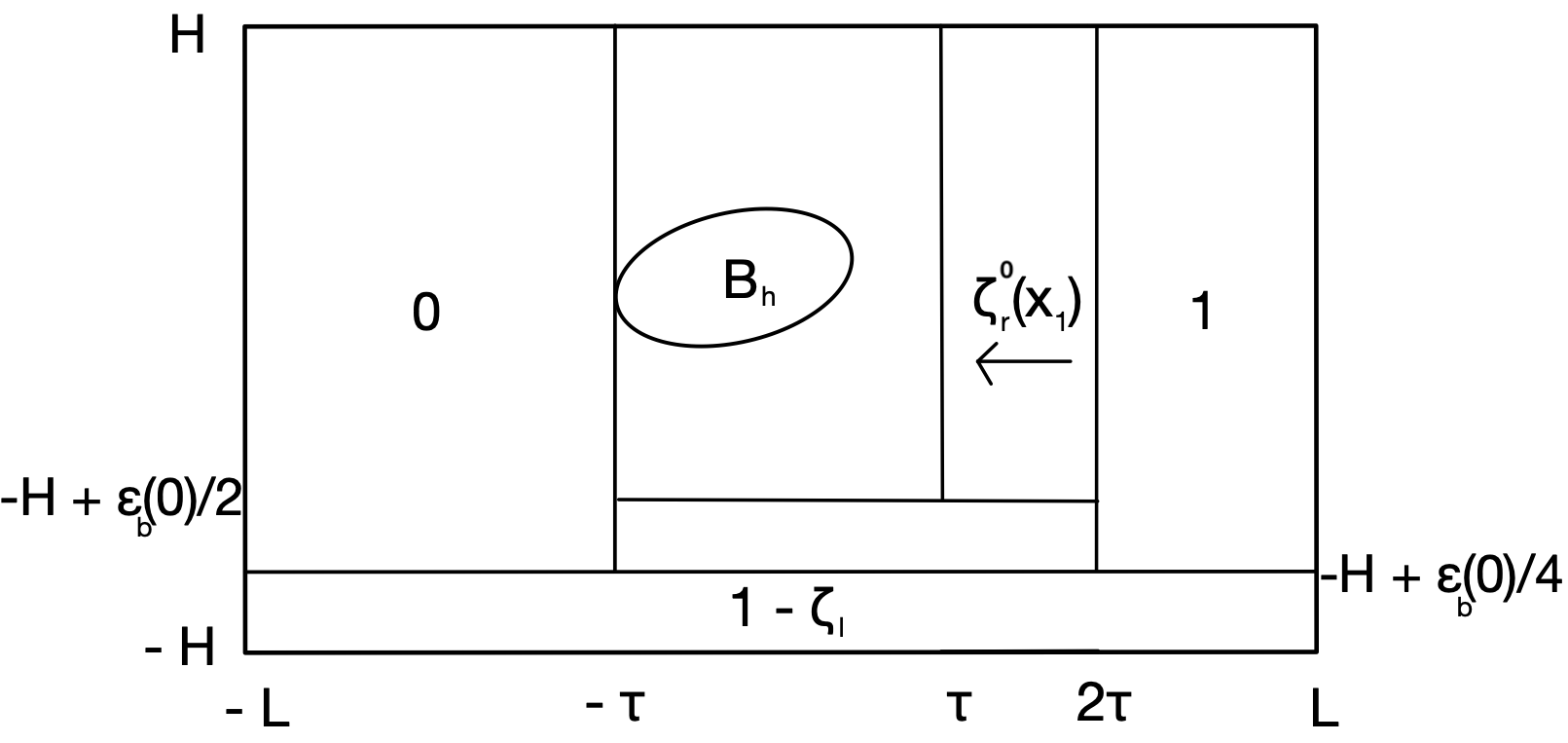}
			\caption{The cut-off functions $\zeta_l$ (left) and $\zeta_r$ (right) on $\overline{ R}$ when $U=0$ for $h>0$.}
			\label{cutoffsU=0}
		\end{center}
	\end{figure}
	
Then, we consider the vector field $s: R\rightarrow \mathbb{R}^2$ defined by	
	\begin{equation*}\begin{aligned}
	s(x_1,x_2):=\lambda\nabla^\perp\left(\zeta_l(x_1,x_2)\int_{x_2}^{H} V_{\rm in}(z)dz +  \zeta_r(x_1,x_2)\int_{x_2}^{H} V_{\rm out}(z)dz\right),\\[2pt]
	\end{aligned}
	\end{equation*}
	which is solenoidal and satisfies the boundary conditions in \eqref{BVprob}. By the same argument used when $h\leq0$, $s$, $\partial_1 s$ and $\partial_2 s$  are uniformly bounded with respect to $h$ in $\Omega_h$. Therefore, using again \eqref{s-bounded}, we obtain \eqref{energy-est-bound} for $h<0$.\end{proof}

\begin{remark}We stated \eqref{energy-est} and \eqref{reg-est} only in case of uniqueness because, in what follows, $\lambda$ will be taken small and
higher powers of $\lambda$ can be upper estimated with the first power.\par
The reflection method used to obtain the regularity result has its own interest. The rectangular shape of the domain is crucial and the technique fails for other polygons. However, in the case of convex polygons, in particular also for a rectangle, one can obtain the more $C^\infty$-regularity result by using Theorem 2 in \cite{KelOsb76}, see also \cite[Section 7.3.3]{Gris85} and \cite{Dauge82}.
\end{remark}

\section{Equilibrium configurations of a FSI problem}\label{sec-FSI}
By Theorem \ref{strong-exiuni}, for any 
$(\lambda, h)\in [0,+\infty)\times (-H+\delta_b, H-\delta_t)$ there exists at least a strong solution $(u,p)=(u(\lambda, h), p(\lambda, h))$ to \eqref{BVprob}. The fluid described by $(u,p)$ in $\Omega_h$ exerts on $B_h$ a force perpendicular to the direction of the inflow, called \textit{lift} (see \cite{PaPriDeLan10}). Since the inflow in \eqref{BVprob} is horizontal, the lift is vertical and given by
\begin{equation}\label{lift}
\mathcal{L}(\lambda, h)=-e_2\cdot \int_{\partial B_h} \mathbb{T}(u,p)n,\\[2pt]
\end{equation}
where $\mathbb{T}$ is the fluid stress tensor, namely
\begin{equation*}
\mathbb{T}(u,p):= \mu(\nabla u + \nabla u^T) -p\mathbb{I}\, ,\\[2pt]
\end{equation*}
and $n$ is the unit outward normal vector to $\partial \Omega_h$, which, on $\partial B_h$, points towards the interior of $B_h$. In fact, $\mathcal{L}(\lambda, h)$ is a multi-valued function when uniqueness for \eqref{BVprob} fails. However, we keep this simple notation instead of writing $\mathcal{L}(\lambda, h, u(\lambda, h), p(\lambda, h)),$ in which also the dependence on the particular solution $(u,p)$ is emphasized. The regularity of the solution (see Theorem \ref{strong-exiuni}) and the smoothness of $\partial B_h$ yield $\mathbb{T}(u,p)_{|_{\partial B_h}}\in H^{1/2}(\partial B_h)\subset L^1(\partial B_h)$, hence the integral in \eqref{lift} is finite. In fact, the lift can also be defined for merely weak solutions, see \eqref{weaklift} in Section \ref{suspension}. Note that \eqref{lift} holds for any $\lambda\geq 0$ and any solution to \eqref{BVprob} but our main result on the FSI problem focuses on small inflows, see Theorem \ref{main-theo}.
\par

Aiming to model, in particular, a wind flow hitting a suspension bridge, the body $B$ may also be subject to a (possibly nonsmooth)
vertical restoring force $f$ tending to maintain $B$ in the equilibrium position $B_0$ (for $h=0$); see Section \ref{suspension}. We assume that $f$ depends only on the position $h$, that $f\in C^0(-H+\delta_b,H-\delta_t)$ with $f(0)=0$ and
\begin{equation}\label{f}
\exists\gamma>0\quad\mbox{s.t.}\quad\frac{f(h_1)-f(h_2)}{h_1-h_2}\geq\gamma\quad\forall h_1,h_2 \in (-H+\delta_b,H-\delta_t), \ h_1\neq h_2.\\[2pt]
\end{equation}
Moreover, we assume that there exists $K>0$ such that
\begin{equation}\label{f-prop}
\begin{aligned}
&\limsup\limits_{h\rightarrow -H+\delta_b} \ f(h)(H-\delta_b+h)^{3/2}\leq -K,
\\[2pt]
&\liminf\limits_{h\rightarrow H-\delta_t} \ \frac{f(h)}{\max \{(H-\delta_t-h)^{-3/2},U(H-\delta_t-h)^{-3}\}}\geq K.\\[2pt]
\end{aligned}
\end{equation}
The assumption \eqref{f-prop} is somehow technical and prevents collisions of $B$ with the horizontal boundary
$\Gamma_b\cup\Gamma_t$, at least for small inflow/outflow. It can probably be relaxed but, so far, only few (numerical) investigations on the effect of proximity to collisions of hydrodynamic forces (such as the lift), acting on non-spherical bodies, have
been tackled, see \cite{Zarghami} and references therein.
The presence of $U$ in \eqref{f-prop} highlights the different  behavior of $f$ when $B$ is close to $\Gamma_t$
for $U=0$ or $U=1$. In the first case,  $f$ has the same strength close to $\Gamma_b$ and $\Gamma_t$. Conversely, for $U=1$, the asymmetry of the boundary conditions requires a different strength of $f$, which is stronger when $B$ is close to $\Gamma_t$ than when $B$ is close to $\Gamma_b$.
Overall, \eqref{f}-\eqref{f-prop} model the fact that $B$ is not allowed to go too
far away from the equilibrium position $B_0$.\\
Since we are interested in the equilibrium configurations of the FSI problem, we consider the boundary-value problem \eqref{BVprob} coupled with a compatibility condition stating that the restoring force balances the lift force, namely
\begin{equation}\label{FSI}
\begin{array}{cc}
-\mu\Delta u + u\cdot \nabla u  + \nabla p = 0,\quad\nabla \cdot u =0\quad\mbox{in}\quad \Omega_h\\[2pt]
u_{|_{\partial B_h}}\!=\!u_{|_{\Gamma_b}}\!=0, \quad u_{|_{\Gamma_t}}\!=\lambda Ue_1,\quad
u_{|_{\Gamma_l}}\!=\lambda V_{\rm in}(x_2)e_1,\quad u_{|_{\Gamma_r}}\!=\lambda V_{\rm out}(x_2)e_1, \\[5pt]
f(h)=-e_2\cdot \int_{\partial B_h} \mathbb{T}(u,p)n.\\[2pt]
\end{array}
\end{equation}
Our main result concerns the existence and uniqueness of the solution to \eqref{FSI} for small values of $\lambda$, that we expect to be stable.

\begin{theorem}\label{main-theo}
	Let $f\in C^0(-H+\delta_b, H-\delta_t)$ satisfy \eqref{f}-\eqref{f-prop} with $f(0)=0$ and $V_{\rm in}$, $V_{\rm out}\in W^{2,\infty}(-H,H)$ satisfy \eqref{match-conds} with \eqref{U01}. There exists $\Lambda_1>0$ and a unique $\mathfrak{h}\in C^0[0, \Lambda_1)$ such that for $\lambda\in[0,\Lambda_1)$ the FSI problem \eqref{FSI} admits a unique solution
	$(u(\lambda,h), p(\lambda,h),h)\in H^2(\Omega_h)\times H^1(\Omega_h)\times (-H+\delta_b, H-\delta_t)$
	given by
	\begin{equation*}(u(\lambda,\mathfrak{h}(\lambda)), p(\lambda,\mathfrak{h}(\lambda)),\mathfrak{h}(\lambda)).\\[2pt]
	\end{equation*}
\end{theorem}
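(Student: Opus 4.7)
The plan is to reformulate \eqref{FSI} as a single scalar equation in $h$. Fix $\lambda \in [0, \Lambda(h))$ so that Theorem \ref{strong-exiuni} produces a unique strong solution $(u(\lambda,h), p(\lambda,h))$, and set
$$
\Phi(\lambda, h) := f(h) - \mathcal{L}(\lambda, h).
$$
Then \eqref{FSI} is equivalent to $\Phi(\lambda, h) = 0$. Since $\mathcal{L}(0, \cdot) \equiv 0$ and $f$ is strictly monotone with $f(0) = 0$, the only root at $\lambda = 0$ is $h = 0$, which will serve as the base point of an implicit function argument.

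The first step is to trap the zero set of $\Phi(\lambda, \cdot)$ in a compact subinterval $[h_-, h_+] \subset (-H+\delta_b, H-\delta_t)$, uniformly for small $\lambda$. This is the purpose of \eqref{f-prop}: near the bottom boundary $f(h)$ diverges like $-K(H-\delta_b+h)^{-3/2}$, while the lift bounds furnished by Theorem \ref{lift-theo} --- themselves a consequence of the tailored solenoidal extension of Section \ref{fluid-section} --- show that $|\mathcal{L}(\lambda, h)|$ is controlled by a power of $\lambda$ times at worst the same singular factor. For $\lambda$ small the restoring force dominates, so $\Phi(\lambda, h) < 0$ near the bottom; the analogous dichotomy at the top, with the $U$-dependent rate of \eqref{f-prop}, gives $\Phi(\lambda, h) > 0$ there. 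On the resulting compact interval $[h_-, h_+]$ the infimum of $h \mapsto \Lambda(h)$ is positive, which fixes a first candidate threshold for $\Lambda_1$.

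On $[h_-, h_+]$, the second and most delicate step is to show that $h \mapsto \mathcal{L}(\lambda, h)$ is differentiable with $|\partial_h \mathcal{L}(\lambda, h)| \leq C\lambda$. Differentiating the boundary integral \eqref{lift} on a moving boundary is non-standard here because $B_h$ undergoes a pure vertical translation while $\partial B_h$ may contain vertical or horizontal segments, so the displacement $he_2$ is not everywhere normal to $\partial B_h$ and the explicit formula of \cite{Piro74} is unavailable. Following \cite{BelFCLemSim97, MurSim74}, one pulls back $(u(\lambda, h), p(\lambda, h))$ to the fixed domain $\Omega_0$ via the shift $x \mapsto x - he_2$, differentiates in $h$ the strong form of \eqref{BVprob} to obtain an elliptic Oseen-type system satisfied by the shape derivative $(\partial_h u, \partial_h p)$, and rewrites $\partial_h \mathcal{L}$ as an integral over the fixed boundary $\partial B$ of the trace of this derivative. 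The refined bounds of Section \ref{fluid-section}, linear in $\lambda$, then propagate to the shape derivative and yield the claimed $O(\lambda)$ Lipschitz constant.

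Combining these facts gives $\partial_h \Phi(\lambda, h) \geq \gamma - C\lambda \geq \gamma/2$ on $[h_-, h_+]$ for $\lambda \in [0, \Lambda_1)$ with $\Lambda_1$ small enough, so that $\Phi(\lambda, \cdot)$ is strictly monotone; together with the sign change from the first step this produces a unique root $\mathfrak{h}(\lambda)$, and the implicit function theorem at $(0, 0)$ yields continuity (indeed $C^1$-regularity) of $\mathfrak{h}$ on $[0, \Lambda_1)$. The hard part is unmistakably the shape-derivative step, and its subtlety is also the reason why FSI uniqueness does not need to borrow a contraction from the fluid equations: it is ultimately the constant $\gamma$ in \eqref{f}, rather than Navier--Stokes uniqueness, that enforces uniqueness of the triple.
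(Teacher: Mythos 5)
Your proposal follows essentially the same route as the paper: the global force $\phi(\lambda,h)=f(h)-\mathcal{L}(\lambda,h)$, exclusion of zeros near the collisions via \eqref{f-prop} together with the lift bounds of Theorem \ref{lift-theo}, the shape-derivative estimate $|\mathcal{L}(\lambda,h_1)-\mathcal{L}(\lambda,h_2)|\le C\lambda|h_1-h_2|$ obtained by pulling back to a fixed domain in the spirit of \cite{BelFCLemSim97,MurSim74}, and the resulting strict monotonicity $\gamma-C\lambda\ge\gamma/2$ combined with an intermediate-value argument. Two small caveats: the final appeal to the implicit function theorem and the claim that $\mathfrak{h}$ is $C^1$ overreach, since $f$ is only assumed continuous with the incremental condition \eqref{f}, so $\phi$ need not be differentiable in $h$; the paper instead deduces continuity of $\mathfrak{h}$ directly from the Lipschitz dependence on $\lambda$ and the strict monotonicity in $h$. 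Also, in the trapping step one must remember that $\mathcal{L}(\lambda,\cdot)$ may be multi-valued where $\lambda\ge\Lambda(h)$ (e.g.\ near $\Gamma_t$ when $U=1$), which is why the paper establishes monotonicity only on a small interval $[-h_0,h_0]$ where uniqueness holds uniformly and rules out \emph{all} solutions outside it by the sign argument, rather than proving the derivative bound on the whole trapped interval as you suggest.
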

We emphasize that Theorem \ref{main-theo} ensures uniqueness of the equilibrium configuration for the FSI problem  \eqref{FSI} in the \textit{uniform} interval $[0, \Lambda_1)$ even in absence of uniqueness for \eqref{BVprob} that, instead,  is only ensured in the possibly \textit{non-uniform} interval $[0,\Lambda(h))$.
The proof of Theorem \ref{main-theo} is given in Section \ref{sec-proofMain}. It is fairly delicate because if $U=0$ (as for symmetric inflow/outflow), then from \eqref{energy-est-eps} we infer that the $H^1$-norm is uniformly bounded with respect to $h$.
However, if $U=1$, the same norm obviously blows up when $B_h$ approaches $\Gamma_t$, which affects the bounds
for the lift in \eqref{lift}. As already mentioned, very little is known when a body approaches a collision,
see again \cite{Zarghami} and references therein. Therefore, the next statement has its own independent interest, it provides some upper
bounds and shows that, probably, the lift behaves differently for homogeneous and inhomogeneous boundary data.
\begin{theorem}\label{lift-theo}
Assume \eqref{U01} and let $\lambda\in [0,\Lambda_0]$ for some $\Lambda_0>0$. Let $(u,p)$ be a strong
	solution to \eqref{BVprob} (see Theorem \ref{strong-exiuni}) and let $\mathcal{L}(\lambda, h)$ be as in \eqref{lift}. There exists $C>0$ (independent of $\lambda, h, u, p$) such that, for any  $(\lambda,h )\in [0,\Lambda_0]\times(-H+\delta_b, H-\delta_t)$,
	\begin{equation}\label{est-lift}
	|\mathcal{L}(\lambda, h)|\leq C \Big((\eps_b(h))^{-3/2}+ \max\{(\eps_t(h))^{-3/2},U (\eps_t(h))^{-3} \}\Big)\lambda\\[2pt]
\end{equation}
	with $\eps_b(h)$ and $\eps_t(h)$ defined in \eqref{parameters}. In fact, $\mathcal{L}(\lambda, h)$ is defined in all $[0,+\infty)\times(-H+\delta_b, H-\delta_t)$,  possibly as a multi-valued function,  but \eqref{est-lift} would hold with different powers of $\lambda$.
\end{theorem}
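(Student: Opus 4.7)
\textbf{Proof plan for Theorem \ref{lift-theo}.}

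The strategy is to convert the boundary integral defining $\mathcal{L}(\lambda,h)$ into a volume integral by means of a carefully constructed solenoidal test function, and then to estimate the resulting volume integrals using the a~priori bounds of Theorem \ref{strong-exiuni} and the refined properties of the test function itself. Throughout, I will exploit the geometric identity $\eps_b(h)+\eps_t(h)=2H-\delta_b-\delta_t$ (independent of $h$), which ensures that at most one of the two distances can be small at a time.

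I would first build a solenoidal vector field $\Phi_h\colon\Omega_h\to\mathbb{R}^2$, in the same spirit as the extension $s$ in \eqref{solext}, satisfying $\Phi_h=e_2$ on $\partial B_h$ and $\Phi_h=0$ on $\partial R$. A convenient choice is $\Phi_h=\nabla^\perp\psi_h$ with stream function
\[
\psi_h(x_1,x_2)=x_1\,\chi_1(x_1)\,\chi_2(x_2),
\]
where $\chi_1\in C^\infty(\mathbb{R})$ equals $1$ on $[-\tau,\tau]$ and vanishes together with $\chi_1'$ outside $[-L,L]$, and $\chi_2\in C^\infty(\mathbb{R})$ equals $1$ on a neighborhood of the vertical range $[h-\delta_b,h+\delta_t]$ of $B_h$ while vanishing together with $\chi_2'$ at $x_2=\pm H$. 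One checks directly that $\Phi_h=(0,1)=e_2$ on $\partial B_h$ and $\Phi_h=0$ on $\partial R$. The transitions of $\chi_2$ take place in horizontal strips of widths $\eps_b(h)$ (near $\Gamma_b$) and $\eps_t(h)$ (near $\Gamma_t$), where $|\chi_2'|\lesssim\eps_{b,t}^{-1}$ and $|\chi_2''|\lesssim\eps_{b,t}^{-2}$. Since $\nabla\Phi_h$ contains terms of the form $x_1\chi_1\chi_2''$ with $|x_1|\le L$, a direct computation as in the derivation of \eqref{solext-eps1} yields
\[
\|\nabla\Phi_h\|_{L^2(\Omega_h)}\le C\bigl(1+\eps_b(h)^{-3/2}+\eps_t(h)^{-3/2}\bigr),
\qquad
\|\Phi_h\|_{L^4(\Omega_h)}\le C\bigl(1+\eps_b(h)^{-3/4}+\eps_t(h)^{-3/4}\bigr),
\]
with $C$ independent of $h$.

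Using the Navier--Stokes equations in the form $\mathrm{div}\,\mathbb{T}(u,p)=u\cdot\nabla u$ together with the divergence theorem and the boundary values of $\Phi_h$, I obtain the pressureless identity
\[
\mathcal{L}(\lambda,h)=-\mu\int_{\Omega_h}(\nabla u+\nabla u^T):\nabla\Phi_h-\int_{\Omega_h}(u\cdot\nabla u)\cdot\Phi_h,
\]
where solenoidality of $\Phi_h$ is what kills the pressure contribution. Applying Cauchy--Schwarz to the linear term and \eqref{energy-est} gives
\[
\Bigl|\mu\int(\nabla u+\nabla u^T):\nabla\Phi_h\Bigr|\le C\|u\|_{H^1(\Omega_h)}\|\nabla\Phi_h\|_{L^2(\Omega_h)}\le C\bigl(1+U\eps_t(h)^{-3/2}\bigr)\bigl(\eps_b(h)^{-3/2}+\eps_t(h)^{-3/2}\bigr)\lambda,
\]
and this already produces the $U\eps_t^{-3}$ factor from the product $\eps_t^{-3/2}\cdot\eps_t^{-3/2}$. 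Mixed terms such as $\eps_t^{-3/2}\eps_b^{-3/2}$ are absorbed using $\eps_b+\eps_t=\mathrm{const}$: if $\eps_t$ is small, $\eps_b$ is bounded below by a fixed positive constant, and vice versa.

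The delicate part is the nonlinear term $\int(u\cdot\nabla u)\cdot\Phi_h$, because the crude bound $\|u\|_{L^4}\|\nabla u\|_{L^2}\|\Phi_h\|_{L^4}\le C\|u\|_{H^1}^2\|\Phi_h\|_{L^4}$ introduces additional negative powers of $\eps_t$ that would exceed the desired $\eps_t^{-3}$. To control it I would split $u=v+s$ with $s$ the solenoidal extension \eqref{solext} (satisfying the refined bounds \eqref{solext-eps1}), integrate by parts the $v$-quadratic contribution $\int(v\otimes v):\nabla\Phi_h$ using $v\in H^1_0(\Omega_h)$, and separately estimate the mixed and quadratic-in-$s$ contributions using the sharper norms of $s$ and of $\Phi_h$ on the explicit supports of the cut-offs. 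The smallness $\lambda\le\Lambda_0$ is then used to absorb one factor of $\lambda$ in every quadratic term, so that each surviving contribution is at worst of the form $\bigl(\eps_b^{-3/2}+\max\{\eps_t^{-3/2},U\eps_t^{-3}\}\bigr)\lambda$. Combining the linear and nonlinear estimates yields \eqref{est-lift}. The main technical obstacle is thus the fine bookkeeping on the nonlinear term: one must track that the nontrivial interactions between $v$, $s$ and $\Phi_h$ in the thin strip near $\Gamma_t$ do not spoil the $\eps_t^{-3}$ scaling dictated by the linear term.
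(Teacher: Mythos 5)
Your overall architecture coincides with the paper's: a solenoidal field equal to $e_2$ on $\partial B_h$ and $0$ on $\partial R$, built as $\nabla^\perp$ of $x_1$ times a product cut-off whose vertical transitions occupy the gaps of widths $\sim\eps_b(h)$, $\eps_t(h)$; the volume representation of the lift; the bound $\|\nabla\Phi_h\|_{L^2}\lesssim 1+\eps_b^{-3/2}+\eps_t^{-3/2}$; the observation that $\eps_b+\eps_t$ is constant so only one distance degenerates at a time; and the splitting $u=v+s$ for the convective term. Your treatment of the linear term is exactly the paper's and correctly yields the $U\eps_t^{-3}$ rate.

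The gap is in the $v$-quadratic part of the convective term near $\Gamma_t$. After integrating by parts to $-\int(v\otimes v):\nabla\Phi_h$, the only norms you have put on the table give $\|v\|_{L^4}^2\|\nabla\Phi_h\|_{L^2(\Omega_{w,t})}\lesssim \eps_t^{-3}\lambda^2\cdot\eps_t^{-3/2}=\eps_t^{-9/2}\lambda^2$ (or $\eps_t^{-7/2}\lambda^2$ if you localize the $L^4$ norm of $v$ to the strip via interpolation), which exceeds the target $\eps_t^{-3}\lambda$. Moving the derivative onto $\Phi_h$ is the wrong direction: $\nabla\Phi_h$ scales like $\eps_t^{-2}$ pointwise while $\Phi_h$ itself only scales like $\eps_t^{-1}$, and the derivative on $u$ costs nothing since it is absorbed into $\|\nabla u\|_{L^2}$, which you must pay anyway. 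The ingredient you are missing — and the paper's key trick — is the Poincar\'e inequality in the thin strip between $B_h$ and $\Gamma_t$ (where $v$ vanishes on $\Gamma_t$): $\|v\|_{L^2(\Omega_{w,t}\cup\Omega^0_{w,t})}\le C\eps_t\|\nabla v\|_{L^2}$. Keeping the term as $\int (v\cdot\nabla u)\cdot\Phi_h$ and estimating it by $\|v\|_{L^2(\mathrm{strip})}\,\|\nabla u\|_{L^2}\,\|\Phi_h\|_{L^\infty}\lesssim \eps_t\cdot\eps_t^{-3/2}\lambda\cdot\eps_t^{-3/2}\lambda\cdot\eps_t^{-1}=\eps_t^{-3}\lambda^2$ closes the argument (the $s\cdot\nabla u$ piece is handled with $\|s\|_{L^2}\lesssim\eps_t^{-1/2}\lambda$ in place of Poincar\'e); the same strip Poincar\'e applied to $u$ itself near $\Gamma_b$ gives the $\eps_b^{-3/2}$ rate there. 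Your integration by parts could in principle be rescued by applying Poincar\'e to \emph{both} $v$-factors and using $\|\nabla\Phi_h\|_{L^\infty(\Omega_{w,t})}\lesssim\eps_t^{-2}$, but neither of those bounds appears in your plan, so as written the nonlinear estimate does not reach $\eps_t^{-3}$.
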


The proof of Theorem \ref{lift-theo} is given in the next section.

\section{Proof of Theorem \ref{lift-theo}}\label{sec-proofTheos}

We rewrite the lift \eqref{lift}, which is a boundary integral, as a volume integral. This can be done by considering $w\in H^1(\Omega_h)$ that satisfies
	\begin{equation}\label{dual-bogo}
	\nabla \cdot w=0\quad \mbox{in}\quad \Omega_h,\qquad w_{|_{\partial B_h}}=e_2,\qquad w_{|_{\partial R}}=0.\\[2pt]
	\end{equation}
	The Divergence Theorem ensures that \eqref{dual-bogo} admits infinitely many solutions.
	Testing \eqref{BVprob} with one such solution $w$ (recall that $\nabla \cdot \mathbb{T}=\mu \Delta u -\nabla p$) yields
	\begin{equation*}
	\int_{\Omega_h} u\cdot \nabla u \cdot w = \int_{\Omega_h}\nabla\cdot \mathbb{T}(u,p)\cdot w=-\mu\int_{\Omega_h}\nabla u : \nabla w + \int_{\partial\Omega_h}\mathbb{T}(u,p)n \cdot w\\[2pt]
	\end{equation*}
	and, using the boundary conditions on $w$,
	\begin{equation}\label{lift-volint}
	-e_2\cdot \int_{\partial B_h}\mathbb{T}(u,p)n = -\int_{\Omega_h} u\cdot \nabla u \cdot w - \mu\int_{\Omega_h}\nabla u : \nabla w.\\[2pt]
	\end{equation}
	Among the infinitely many solutions of \eqref{dual-bogo}, we select one obtained by using a solenoidal extension similar to the ones introduced in Section \ref{fluid-section}. We consider a cut-off function $\chi\in C^\infty(\overline{R})$ with $0\leq\chi\leq 1$ such that
	\begin{equation*}\begin{aligned}
	\chi(x_1,x_2)=\begin{cases}
	1 \quad &\mbox{in} \quad [-\tau, \tau]\times [h-\delta_b, h+\delta_t],\\[2pt]
	0 \quad &\mbox{in} \quad\Omega_h \setminus ( [-2\tau, 2\tau]\times [h-\delta_b- \tfrac{\eps_b(h)}{2}, h+\delta_t + \tfrac{\eps_t(h)}{2}]),\\[2pt]
	\chi^0(x_1) \quad &\mbox{in}\quad  ([-2\tau,-\tau]\cup [\tau,2\tau])\times [h-\delta_b, h+\delta_t], \\[2pt]
	C^\infty\mbox{-completion} \quad &\mbox{elsewhere}.
	\end{cases}
	\end{aligned}
	\end{equation*}
	We put $w=\nabla^\perp(x_1 \chi)$. Clearly $w\in H^1(\Omega_h)$ satisfies \eqref{dual-bogo} and $\mathrm{supp} \ w \subseteq \Omega_w=   \Omega_{w,b} \cup  \Omega_{w,c} \cup  \Omega_{w,t}$ with
	\begin{equation*}
	\begin{aligned}
	\Omega_{w,b}&:=[-2\tau,2\tau]\times [h-\delta_b- \tfrac{\eps_b(h)}{2}, h-\delta_b],\qquad\Omega_{w,c}:=[-2\tau,2\tau]\times [h-\delta_b, h+\delta_t],\\[2pt]
	\Omega_{w,t}&:=[-2\tau,2\tau]\times [h+\delta_t, h+\delta_t + \tfrac{\eps_t(h)}{2}].\\[2pt]
	\end{aligned}
	\end{equation*}
	Moreover, from the definition  of $\chi$ it follows that $w$ and its $x_1$ and $x_2$-derivatives are uniformly bounded with respect to $h$ in $\Omega_{w,c}$, while in $\Omega_{w,b}$
	\begin{equation}\label{w-bounds-bot}\begin{aligned}
	|w|\leq C  (1&+(\eps_b(h))^{-1}),  \quad 	|\partial_1 w|\leq (1+(\eps_b(h))^{-1}),\\& |\partial_2 w|\leq ((\eps_b(h))^{-1}+(\eps_b(h))^{-2})\\[2pt]
	\end{aligned}
	\end{equation} and in $ \Omega_{w,t}$
	\begin{equation}\label{w-bounds-top}\begin{aligned}
	|w|\leq C  (1&+(\eps_t(h))^{-1}),  \quad 	|\partial_1 w|\leq (1+(\eps_t(h))^{-1}),\\& |\partial_2 w|\leq ((\eps_t(h))^{-1}+(\eps_t(h))^{-2}).\\[5pt]
		\end{aligned}
	\end{equation}
	{\it $B_h$ close  to $\Gamma_b$.} We consider the case when $h$ is close to $-H+\delta_b$, hence $\eps_b(h)$ is close to zero. This implies that $\eps_t(h)\geq 1$ and the bounds in \eqref{w-bounds-top} become uniform. Choosing in \eqref{lift-volint} the previously constructed $w$, we observe that the integrals in the right-hand side are defined only on $\Omega_w$. Let us split these integrals over the regions $\Omega_{w,b}$, which is shrinking as $\eps_b(h)$ goes to zero, and $\Omega_w\setminus \Omega_{w,b}$. On the one hand, H\"older inequality and \eqref{energy-est} yield
	\begin{equation}\label{est-lift-bound}\begin{aligned}
	&\left|	\int_{\Omega_{w} \setminus  \Omega_{w,b}} u\cdot \nabla u  \cdot w + \mu\int_{\Omega_{w} \setminus  \Omega_{w,b}} \nabla u  : \nabla w \right|\\[2pt]&\leq C \|u\|^2_{H^1(\Omega_h}\|w\|_{L^\infty(\Omega_{w} \setminus  \Omega_{w,b})}+\mu\| \nabla u\|_{L^2(\Omega_h)}\| \nabla w \|_{L^2(\Omega_{w} \setminus  \Omega_{w,b})} \\[2pt] &\leq C( \|u\|^2_{H^1(\Omega_h)}+\| u\|_{H^1(\Omega_h)})
	\leq C\lambda\\[2pt]
	\end{aligned}
	\end{equation} for $\lambda\in[0,\Lambda_0]$, using that $w$ and its derivatives are uniformly bounded with respect to $h$ in $\Omega_{w} \setminus  \Omega_{w,b}$.
	On the other hand, since $w\equiv0$ in $\Omega^0_{w, b}:=[-2\tau,2\tau]\times [-H,h-\delta_b-\tfrac{\eps_b(h)}{2}]$ and  $u_{|_{\Gamma_b}}=0$, Poincaré inequality for $u$ in $\Omega_{w, b}\cup \Omega^0_{w, b}$, the H\"older inequality and \eqref{energy-est} yield
	\begin{equation}\label{est-lift-bound-b}\begin{aligned}
&\left|	\int_{\Omega_{w,b}} u\cdot \nabla u  \cdot w \right| =\left|\int_{\Omega_{w,b}\cup \Omega^0_{w,b}} u\cdot \nabla u  \cdot w \right|  \\[2pt]&\leq      \eps_b(h) \|\nabla u\|^2_{L^2(\Omega_{w,b}\cup \Omega^0_{w,b})} \|w\|_{L^\infty(\Omega_{w,b})}
	\leq C\| u\|^2_{H^1(\Omega_h)} \leq C\lambda
	\end{aligned}
	\end{equation}and
	\begin{equation}\label{est-lift-blow}
	\left|\int_{\Omega_{w,b}} \nabla u  : \nabla w\right| \leq \|  u\|_{H^1(\Omega_h)}\|\nabla w\|_{L^2(\Omega_{w,b})}\leq C(\eps_b(h))^{-3/2}\lambda,\\[2pt]
	\end{equation}for $\lambda\in[0,\Lambda_0]$, using that  $\|w\|_{L^\infty(\Omega_{w,b})}\leq C (\eps_b(h))^{-1}$ and $\|\nabla w\|_{L^2(\Omega_{w,b})}\leq C (\eps_b(h))^{-3/2}$ for $\eps_b(h) $ close to zero, due to \eqref{w-bounds-bot}.\\
	Putting together \eqref{est-lift-bound}-\eqref{est-lift-blow}, then there exists $\eta_b>0$ sufficiently small such that, for any $(\lambda,h)\in [0,\Lambda_0]\times (-H+\delta_b, -H+\delta_b+\eta_b),$
	\begin{equation}\label{est-lift-3/2}
	|\mathcal{L}(\lambda, h)|\leq C (\eps_b(h))^{-3/2} \lambda.
	\end{equation}
	We remark that the same blow-up rate in \eqref{est-lift-3/2} could be obtained without taking advantage of Poincaré inequality in \eqref{est-lift-bound-b} but using directly $u\in H^1\subset L^4$.  This idea, however, will be crucial to obtain a better blow-up rate for the lift in the case when the body is close to $\Gamma_t$, that we now analyze.\\[2pt]
	{\it $B_h$ close  to $\Gamma_t$.}  We consider the case when $h$ is close to $H-\delta_t$, hence $\eps_t(h)$ is close to zero. Analogously to what done in the previous case, we split the integrals over the regions $\Omega_{w,t}$, which is shrinking as $\eps_t(h)$ goes to zero, and $\Omega_w\setminus\Omega_{w,t}$. On the one hand, H\"older inequality yields
	\begin{equation*}\begin{aligned}
	&\left|	\int_{\Omega_w\setminus\Omega_{w,t}} u\cdot \nabla u \cdot w + \mu\int_{\Omega_w\setminus\Omega_{w,t}} \nabla u  : \nabla w\right| \\[2pt]&\leq C \|u\|^2_{H^1(\Omega_h)}\|w\|_{L^\infty(\Omega_w\setminus\Omega_{w,t})}+\mu\| \nabla u\|_{L^2(\Omega_h)}\| \nabla w \|_{L^2(\Omega_w\setminus\Omega_{w,t})} \\[2pt] &\leq C( \|u\|^2_{H^1(\Omega_h)}+\| u\|_{H^1(\Omega_h)})\\[2pt]
	\end{aligned}
	\end{equation*}using that $w$ and its derivatives are uniformly bounded with respect to $h$ in $\Omega_{w} \setminus  \Omega_{w,t}$. On the other hand, since $w\equiv0$ in $\Omega^0_{w, t}:=[-2\tau,2\tau]\times [h+\delta_t+\tfrac{\eps_t(h)}{2}, H]$ and $u=v+s$ with  $v_{|_{\Gamma_t}}=0$, Poincaré inequality for $v$ in $\Omega_{w,t}\cup \Omega^0_{w,t}$ and H\"older inequality yield
	\begin{equation*}\begin{aligned}
			&\left|\int_{\Omega_{w,t}} u\cdot \nabla u  \cdot w \right|=  \left|\int_{\Omega_{w,t}\cup \Omega^0_{w,t}} v\cdot \nabla u  \cdot w + \int_{\Omega_{w,t}\cup \Omega^0_{w,t}} s\cdot \nabla u  \cdot w\right|
			\\[2pt]
			&\leq \eps_t(h) \|\nabla v\|_{L^2(\Omega_h)} \|\nabla u\|_{L^2(\Omega_h)}\|w\|_{L^\infty(\Omega_{w,t})} + \|s\|_{L^2(\Omega_h)}\|\nabla u\|_{L^2(\Omega_h)}\|w\|_{L^\infty(\Omega_{w,t})}\\[2pt]
			&\leq C\|\nabla v \|_{L^2(\Omega_h)}\| u \|_{H^1(\Omega_h)} + C\|s\|_{L^2(\Omega_h)}\|u\|_{H^1(\Omega_h)}(\eps_t(h))^{-1}\\[2pt]
		\end{aligned}
	\end{equation*}
	and
	\begin{equation*}
		\left|\int_{\Omega_{w,t}} \nabla u : \nabla w\right|\leq \| u \|_{H^1(\Omega_h)}\|\nabla w\|_{L^2(\Omega_{w,t})}\leq \| u \|_{H^1(\Omega_h)}(\eps_t(h))^{-3/2},\\[2pt]
	\end{equation*}using that $\|w\|_{L^\infty(\Omega_{w,t})}\leq C (\eps_t(h))^{-1}$ and $\|\nabla w\|_{L^2(\Omega_{w,t})}\leq C (\eps_t(h))^{-3/2}$ for $\eps_t(h)$ close to zero,  due to \eqref{w-bounds-top}.
	Now we shall distinguish the cases $U=1$ and $U=0$. When $U=1$, using \eqref{energy-est}, \eqref{solext-eps1} and \eqref{energy-est-esp-v} we obtain, for $\lambda\in[0,\Lambda_0]$,
	\begin{equation}\label{est-lift-bound-top}\begin{aligned}
	&\left|\int_{\Omega_w\setminus\Omega_{w,t}} u\cdot \nabla u \cdot w + \mu \int_{\Omega_w\setminus\Omega_{w,t}} \nabla u  : \nabla w \right|
	\leq C(\eps_t(h))^{-3}\lambda
	\end{aligned}
	\end{equation}
and
	 \begin{equation}\label{est-lift-blow-top}
	 	\left|	\int_{\Omega_{w,t}} u\cdot \nabla u  \cdot w \right|\leq C(\eps_t(h))^{-3} \lambda, \quad
	 	\left|	\int_{\Omega_{w,t}} \nabla u : \nabla w  \right|\leq C (\eps_t(h))^{-3}\lambda.
	 \end{equation}
When $U=0$, using \eqref{energy-est} and $\eqref{s-bounded}$, we obtain, for $\lambda\in[0,\Lambda_0]$,
	\begin{equation}
	\left|\int_{\Omega_w\setminus\Omega_{w,t}} u\cdot \nabla u  \cdot w + \mu\int_{\Omega_w\setminus\Omega_{w,t}} \nabla u  : \nabla w \right|
	\leq C\lambda
	\end{equation}
and
	\begin{equation}\label{est-lift-blow-top2}
\left|	\int_{\Omega_{w,t}} u\cdot \nabla u  \cdot w\right| \leq C(\eps_t(h))^{-1}\lambda, \ \ \
\left|	\int_{\Omega_{w,t}} \nabla u : \nabla w \right| \leq C(\eps_t(h))^{-3/2}\lambda.\\[2pt]
	\end{equation}
	Putting together \eqref{est-lift-bound-top}-\eqref{est-lift-blow-top2}, then there exists $\eta_t>0$ sufficiently small such that, for $(\lambda,h)\in [0,\Lambda_0]\times (H-\delta_t-\eta_t, H-\delta_t),$
	\begin{equation}\label{est-lift-3}
	|\mathcal{L}(\lambda, h)|\leq C\max\{(\eps_t(h))^{-3/2},U (\eps_t(h))^{-3}\}\lambda.\\[5pt]
	\end{equation}For $h\in [-H+\delta_b + \eta_b, H-\delta_t-\eta_t]$,  $\eps_b(h)$ and $\eps_t(h)$ are uniformly bounded from below with respect to $h$. Therefore, by combining \eqref{est-lift-3/2} and \eqref{est-lift-3}, there exists $C>0$ independent of $h$ such that, for any $(\lambda,h)\in [0,\Lambda_0]\times (-H+\delta_b, H-\delta_t)$,
		\begin{equation*}
	|\mathcal{L}(\lambda, h)|\leq C ((\eps_b(h))^{-3/2}+ \max\{(\eps_t(h))^{-3/2},U (\eps_t(h))^{-3} \})\lambda.
	\end{equation*}

\section{Proof of Theorem \ref{main-theo}}\label{sec-proofMain}

\subsection{Continuity and monotonicity of the global force}\label{cont-mono-phi}
In Section \ref{sec-FSI} we have defined the lift $\mathcal{L}(\lambda,h)$ as a possibly multi-valued function of $(\lambda, h)\in  [0,+\infty)\times(-H+\delta_b,H-\delta_t)$.
Let $f$ be the restoring force satisfying \eqref{f}-\eqref{f-prop}. Then, the global force acting on $B_h$ is
the function $\phi:  [0,+\infty)\times(-H+\delta_b,H-\delta_t) \rightarrow \mathbb{R}$ defined by
\begin{equation}\label{phi}
\phi(\lambda, h)=f(h) - \mathcal{L}(\lambda, h).\end{equation}
We first focus on the $\lambda$-dependence by maintaining $h$ fixed and we
prove the Lipschitz-continuity of the map $\lambda \mapsto\phi(\lambda, h)$.

\begin{proposition}\label{phi-lip}
 Let $\overline{h}=H-\max\{\delta_b,\delta_t\}$.
  There exist $\overline{\lambda}>0$ and $h^*\in (0, \overline{h})$ such that $\lambda\mapsto\phi(\lambda, h)$ is Lipschitz continuous in $ [0,\overline{\lambda})$ for all $h\in [-h^*, h^*]$.
\end{proposition}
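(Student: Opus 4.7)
Since $f$ depends only on $h$, the contribution of $f$ to $\phi(\lambda,h)$ is independent of $\lambda$ and so it suffices to prove Lipschitz continuity of $\lambda\mapsto\mathcal{L}(\lambda,h)$ for each $h\in[-h^*,h^*]$. First I would choose $h^*\in(0,\overline{h})$ so that both $\eps_b(h)$ and $\eps_t(h)$ are bounded below by some positive constant uniformly for $h\in[-h^*,h^*]$; this makes every occurrence of $C_h$ (in Theorem \ref{strong-exiuni} and its proof) uniform in $h$, so there exists $\overline{\lambda}>0$, smaller than the infimum over $[-h^*,h^*]$ of the uniqueness threshold $\Lambda(h)$ in \eqref{smallness}, such that \eqref{BVprob} has a unique strong solution $(u(\lambda,h),p(\lambda,h))$ for every $(\lambda,h)\in[0,\overline{\lambda})\times[-h^*,h^*]$, and this solution satisfies \eqref{energy-est}--\eqref{reg-est} with constants independent of $h$.

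Fix $h\in[-h^*,h^*]$ and $\lambda_1,\lambda_2\in[0,\overline{\lambda})$. Denote by $(u_i,p_i)$ the corresponding strong solutions and set $z=u_1-u_2$, $q=p_1-p_2$. Writing $s_i=\lambda_i\tilde s$ with $\tilde s$ the (normalized) solenoidal extension from \eqref{solext}, the difference $z$ is solenoidal, has boundary values $(\lambda_1-\lambda_2)\tilde s_{|\partial\Omega_h}$, and satisfies
\begin{equation*}
-\mu\Delta z+u_1\cdot\nabla z+z\cdot\nabla u_2+\nabla q=0\quad\mbox{in }\Omega_h.
\end{equation*}
Setting $\tilde z=z-(\lambda_1-\lambda_2)\tilde s\in W(\Omega_h)$ and testing the equation with $\tilde z$, the nonlinear term $\int u_1\cdot\nabla\tilde z\cdot\tilde z$ vanishes by $\nabla\cdot u_1=0$, and the remaining terms involving $\tilde s$ are linear in $(\lambda_1-\lambda_2)$. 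Using H\"older, Ladyzhenskaya and Poincar\'e inequalities, the uniform $H^1$-bound on $u_1,u_2$ from \eqref{energy-est}, and the uniform bounds on $\tilde s$ guaranteed by our choice of $h^*$, we get
\begin{equation*}
\mu\|\nabla\tilde z\|_{L^2(\Omega_h)}^2\le C\lambda\,\|\nabla\tilde z\|_{L^2(\Omega_h)}^2+C|\lambda_1-\lambda_2|\,\|\nabla\tilde z\|_{L^2(\Omega_h)},
\end{equation*}
with $C$ independent of $h$. Shrinking $\overline{\lambda}$ so that $C\overline{\lambda}<\mu/2$ absorbs the first term into the left, yielding $\|\tilde z\|_{H^1(\Omega_h)}\le C|\lambda_1-\lambda_2|$ and therefore $\|z\|_{H^1(\Omega_h)}=\|u_1-u_2\|_{H^1(\Omega_h)}\le C|\lambda_1-\lambda_2|$.

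To transfer this $H^1$-estimate to the lift, I would use the volume formulation \eqref{lift-volint}: taking $w\in H^1(\Omega_h)$ as constructed after \eqref{lift-volint} (which depends on $h$ but not on $\lambda$),
\begin{equation*}
\mathcal{L}(\lambda_1,h)-\mathcal{L}(\lambda_2,h)=-\int_{\Omega_h}\bigl(u_1\cdot\nabla z+z\cdot\nabla u_2\bigr)\cdot w-\mu\int_{\Omega_h}\nabla z:\nabla w.
\end{equation*}
H\"older's inequality combined with Sobolev embedding $H^1\hookrightarrow L^4$ in dimension two and the uniform $H^1$-bounds for $u_1,u_2,w$ on the range of $h$ considered give
\begin{equation*}
|\mathcal{L}(\lambda_1,h)-\mathcal{L}(\lambda_2,h)|\le C\bigl(\|u_1\|_{H^1(\Omega_h)}+\|u_2\|_{H^1(\Omega_h)}+1\bigr)\|w\|_{H^1(\Omega_h)}\|z\|_{H^1(\Omega_h)}\le C|\lambda_1-\lambda_2|,
\end{equation*}
which is the required Lipschitz estimate.

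The main obstacle is the apparent non-uniformity of all the constants in Theorem \ref{strong-exiuni} when $B_h$ is allowed to approach $\partial R$; the remedy is precisely the restriction to $[-h^*,h^*]\subset(-H+\delta_b,H-\delta_t)$, on which $\eps_b(h),\eps_t(h)$ stay bounded away from $0$ so that all singular factors in \eqref{solext-eps1}, \eqref{energy-est-esp-v}, \eqref{w-bounds-bot}--\eqref{w-bounds-top} are harmless. A secondary point to be careful about is that, strictly speaking, $\mathcal{L}(\lambda,h)$ is defined through the pointwise value of $(u,p)$ on $\partial B_h$, so one must use the volume representation \eqref{lift-volint} (or the continuity of the trace from $H^2\times H^1$) to turn an $H^1$-closeness of solutions into closeness of the lift.
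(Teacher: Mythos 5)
Your argument is correct, and it diverges from the paper's proof in the key step that converts closeness of the solutions into closeness of the lift. The paper proves the stronger $H^2(\Omega_h)\times H^1(\Omega_h)$ continuous dependence on $\lambda$ (estimate \eqref{cont-dep}): it recasts the equation for $v=u(\lambda_1)-u(\lambda_2)$ as the forced homogeneous problem \eqref{tecnica}, invokes the reflection argument and the Stokes regularity theory of \cite[Theorem IV.5.1]{Galdi-steady} to bound $\|v\|_{H^2}+\|q\|_{H^1}$ by $C_h|\lambda_1-\lambda_2|$, and then passes to the boundary integral \eqref{lift} via the Trace Theorem, using the linearity of $(u,p)\mapsto\mathbb{T}(u,p)$. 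You instead stop at the $H^1$ level — your energy estimate for $\tilde z$ is essentially the paper's uniqueness computation adapted to inhomogeneous boundary data, and it is correct — and you reach the lift through the volume representation \eqref{lift-volint} with the auxiliary field $w$ from Section \ref{sec-proofTheos}, which depends on $h$ but not on $\lambda$ and whose norms are uniform for $|h|\le h^*$. Since \eqref{lift-volint} is already established (for strong solutions, which Theorem \ref{strong-exiuni} provides) before Section \ref{sec-proofMain}, there is no circularity. What each approach buys: yours is more elementary, avoiding the second application of the $H^2$ regularity machinery entirely; the paper's yields the quantitatively stronger estimate \eqref{cont-dep}, which however is not needed elsewhere. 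Both hinge on the same preliminary reduction — restricting to $|h|\le h^*$ so that $\eps_b(h),\eps_t(h)$ are bounded below and all constants $C_h$ become uniform — which you state explicitly and correctly.
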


\begin{proof}To begin, let us take $\overline{\lambda}$ and $h^*$ sufficiently small so that Theorem \ref{strong-exiuni} guarantees the uniqueness for \eqref{BVprob} whenever $\lambda < \overline{\lambda}$ and $|h|\leq h^*$ (see Figure \ref{Lambdone}). Hence, $\mathcal{L}(\lambda, h)$ is a one-valued function on $[0,\overline{\lambda})\times [-h^*, h^*]$. Since $f$ does not depend on $\lambda$ we only need to show that $\lambda\mapsto \mathcal{L}(\lambda,h)$ is Lipschitz continuous in a neighborhood
of $\lambda=0$, possibly smaller than $[0, \overline{\lambda})$.\\ For $\lambda_1, \lambda_2\in [0,\overline{\lambda})$ consider, respectively,
the solutions $(u(\lambda_1), p(\lambda_1))$ and $(u(\lambda_2), p(\lambda_2))$ to \eqref{BVprob}. Let
\begin{equation}\label{defdeltaup}
v:= u(\lambda_1) - u(\lambda_2), \qquad q:= p(\lambda_1) - p(\lambda_2),\\[2pt]
\end{equation}
so that $(v,q)$ satisfies
\begin{equation}\label{BVprob-Lambda12}
\begin{aligned}
&-\mu\Delta v+v\cdot\nabla v+\nabla q =-v\cdot\nabla u(\lambda_2)-u(\lambda_2)\cdot \nabla v,\qquad\nabla\cdot v=0\quad\mbox{in}\quad \Omega_h,\\
&v_{|_{\Gamma_t}}=(\lambda_{ 1}\!-\!\lambda_{ 2} )Ue_1,\
v_{|_{\Gamma_l}}=(\lambda_{ 1}\!-\!\lambda_{ 2} )V_{\rm in}(x_2)e_1,\
v_{|_{\Gamma_r}}=(\lambda_{ 1}\!-\!\lambda_{ 2} )V_{\rm out}(x_2)e_1,\\
&v_{|_{\partial B_h}}=v_{|_{\Gamma_b}}=0.\\[2pt]
\end{aligned}
\end{equation}
Let $v_\lambda:=v-s_\lambda$, where $s_\lambda\in W^{1,\infty}(\Omega_h)\cap H^2(\Omega_h)$ is a solenoidal extension of $v$ that can be constructed as $s$ in \eqref{solext} and, hence, it satisfies the estimates \eqref{implicit-solext-est}, namely
\begin{equation}\label{solextdiff-est}
\begin{array}{rl}
\|\nabla s_\lambda\|_{L^2(\Omega_h)}\leq C_h|\lambda_1-\lambda_2|,\quad & \|\Delta s_\lambda\|_{L^2(\Omega_h)} \leq C_h|\lambda_1-\lambda_2|,\\	\|s_\lambda\|_{L^\infty(\Omega_h)}\le C_h|\lambda_1-\lambda_2|,\quad & \|s_\lambda \cdot \nabla s_\lambda \|_{L^2(\Omega_h)}\leq C_h|\lambda_{ 1}-\lambda_{ 2} |^2.\\[2pt]
\end{array}
\end{equation}
We then rewrite \eqref{BVprob-Lambda12} as
\begin{equation}\label{tecnica}
-\mu\Delta v_\lambda + v_\lambda \cdot \nabla v_\lambda + \nabla q =g,\quad \nabla \cdot v_\lambda=0\quad \mbox{in}\quad \Omega_h,
\qquad {v_\lambda}_{|_{\partial \Omega_h}}=0,\\[-5pt]
\end{equation}
where
$$g:=\mu\Delta s_\lambda-v\cdot\nabla(u(\lambda_2)+s_\lambda)-u(\lambda_2)\cdot\nabla v+s_\lambda\cdot\nabla s_\lambda-s_\lambda\cdot\nabla v.$$
From Theorem \ref{strong-exiuni} we know that $v,u(\lambda_2)\in H^2(\Omega_h)\hookrightarrow L^\infty(\Omega_h)$, so that $g\in L^2(\Omega_h)$. Moreover,

\begin{align*}
\|g\|_{L^2(\Omega_h)} &\le \mu\|\Delta s_\lambda\|_{L^2(\Omega_h)} +\big(\|\nabla u(\lambda_2)\|_{L^2(\Omega_h)}
+\|\nabla s_\lambda\|_{L^2(\Omega_h)}\big)\|v\|_{L^\infty(\Omega_h)}\\
 & \quad +\|u(\lambda_2)\|_{L^\infty(\Omega_h)}\|\nabla v\|_{L^2(\Omega_h)}\!+\!\|s_\lambda\!\cdot\!\nabla s_\lambda\|_{L^2(\Omega_h)}
 \!+\!\|s_\lambda\|_{L^\infty(\Omega_h)}\|\nabla v\|_{L^2(\Omega_h)}\\[2pt]
 &\le C_h|\lambda_1-\lambda_2| +C_h(\lambda_2+|\lambda_1-\lambda_2|\big)\|v\|_{H^2(\Omega_h)}\\
 & \quad  +C_h\lambda_2\|v\|_{H^2(\Omega_h)}+C_h|\lambda_{ 1}-\lambda_{ 2} |^2
 +C_h|\lambda_{ 1}-\lambda_{ 2} |\cdot\|v\|_{H^2(\Omega_h)},
\end{align*}
where we used H\"older inequality (first step), the estimates \eqref{energy-est}-\eqref{reg-est}-\eqref{solextdiff-est} and the
embeddings $H^2\hookrightarrow H^1,L^\infty$ (second step).
Thus, by extending the solution as in the proof of Theorem  \ref{strong-exiuni}, recalling \cite{MurSim74}, and applying  \cite[Theorem IV.5.1]{Galdi-steady} to \eqref{tecnica}, we obtain
\begin{equation}\label{vLambda-ineq}
\|v_\lambda\|_{H^2(\Omega_h)}+\|q\|_{H^1(\Omega_h)}\le C_h|\lambda_1-\lambda_2|
+C_h\big(\lambda_2+|\lambda_1-\lambda_2|\big)\|v\|_{H^2(\Omega_h)}.
\end{equation}
Hence, there exists a possibly smaller $\overline{\lambda}>0$ such that, if $\lambda_1, \lambda_2 \in[0,\overline{\lambda})$, the second
term in the right-hand side of \eqref{vLambda-ineq} can be absorbed in the left-hand side and
\begin{equation}\label{cont-dep}
\|v_\lambda\|_{H^2(\Omega_h)} + \|q\|_{H^1(\Omega_h)}  \leq C_h|\lambda_1-\lambda_2|,\\[2pt]
\end{equation}
for some $C_h>0$ also depending on $\overline{\lambda}$.
Since the lift \eqref{lift} is linear with respect to $u$ and $p$, we have
	\begin{equation*}
\mathcal{L}(\lambda_1,h)-\mathcal{L}(\lambda_2,h)= -e_2 \cdot \int_{\partial B_h}\mathbb{T}(v,q) n
	\end{equation*}
with $v$ and $q$ defined in \eqref{defdeltaup}.
Therefore, using the Trace Theorem and \eqref{cont-dep}, we infer that, for any $\lambda_1,\lambda_2\in[0,\overline{\lambda})$ and a
fixed $h\in [-h^*,h^*]$, we have
	\begin{equation*}
	\begin{aligned}
		|\mathcal{L}(\lambda_1, h)-\mathcal{L}(\lambda_2,h)|&\leq C_h\left( \|\nabla v\|_{L^1(\partial B_h)} + \|q\|_{L^1(\partial B_h)}\right)\\
		&\leq C_h\left( \|v\|_{H^2(\Omega_h)} + \| q\|_{H^1( \Omega_h)}\right)\leq C_h|\lambda_1-\lambda_2|.\\[2pt]
		\end{aligned}
	\end{equation*}
This shows that $\lambda\mapsto \mathcal{L}(\lambda, h)$ is Lipschitz continuous in $[0,\overline{\lambda})$ for all $h\in[-h^*,h^*]$.
\end{proof}
We now focus on the $h$-dependence of $\phi$ by maintaining $\lambda$ fixed. Although we prove a slightly stronger result, we state:

\begin{proposition}\label{phi-incre}
Let $\overline{h}=H-\max\{\delta_b,\delta_t\}$.
	There exist $h_0\in(0,h^*]$ and $\lambda_0\in(0, \overline{\lambda}]$ (see Proposition \ref{phi-lip}) such that $ h\mapsto \phi ( \lambda, h)$ is continuous and strictly increasing  in $[-h_0, h_0]$ for all $\lambda \in[0,\lambda_0)$.
\end{proposition}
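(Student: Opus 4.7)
The plan is to reduce the whole statement to a Lipschitz estimate on $h\mapsto \mathcal L(\lambda,h)$ with a Lipschitz constant that is small with $\lambda$. Since $\phi(\lambda,h)=f(h)-\mathcal L(\lambda,h)$, with $f$ continuous and satisfying $f(h_1)-f(h_2)\ge \gamma(h_1-h_2)$ for $h_1>h_2$ by \eqref{f}, it suffices to establish
\[
|\mathcal L(\lambda,h_1)-\mathcal L(\lambda,h_2)|\le C\lambda\,|h_1-h_2|\qquad \forall\, h_1,h_2\in[-h_0,h_0],\ \lambda\in[0,\overline{\lambda}),
\]
with $C$ independent of $\lambda$. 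Indeed continuity of $\phi$ in $h$ is immediate from this and from continuity of $f$, while strict monotonicity follows by choosing $\lambda_0\in(0,\overline\lambda]$ with $C\lambda_0<\gamma$, since then $\phi(\lambda,h_1)-\phi(\lambda,h_2)\ge(\gamma-C\lambda)(h_1-h_2)>0$.

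To compare $u(\lambda,h_1)$ and $u(\lambda,h_2)$, which live on different domains, I would transport everything to the fixed reference domain $\Omega_0$. For a cut-off $\eta\in C_c^\infty(R)$ with $\eta\equiv 1$ near $B_0$ and $\eta\equiv 0$ near $\partial R$, set $\Phi_h(y):=y+h\eta(y)e_2$; for $|h|\le h_0$ small enough $\Phi_h:\overline{\Omega_0}\to\overline{\Omega_h}$ is a $C^\infty$-diffeomorphism with $\Phi_0=\mathrm{id}$, equal to the identity on $\partial R$ and to the rigid translation $y\mapsto y+he_2$ on a neighborhood of $B_0$. Pull back the velocity and pressure by the Piola transform
$\tilde u^h(y):=(\det D\Phi_h(y))^{-1}D\Phi_h(y)\,u(\lambda,h)(\Phi_h(y))$ and $\tilde p^h(y):=p(\lambda,h)(\Phi_h(y))$. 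The pair $(\tilde u^h,\tilde p^h)\in H^2(\Omega_0)\times H^1(\Omega_0)$ then solves a transported Navier--Stokes system on the fixed domain $\Omega_0$, with coefficients that are $C^\infty$ in $h\in[-h_0,h_0]$, equal to those of \eqref{BVprob} at $h=0$, and with $h$-derivatives uniformly bounded. The boundary data are the original inhomogeneous data on $\partial R$ and zero on $\partial B_0$.

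Next, in the spirit of Proposition \ref{phi-lip}, set $v:=\tilde u^{h_1}-\tilde u^{h_2}$ and $q:=\tilde p^{h_1}-\tilde p^{h_2}$, subtract the two transported systems, and rewrite the difference as a linearized Navier--Stokes problem on $\Omega_0$ with homogeneous boundary data and a right-hand side of the form $(h_1-h_2)\bigl[\mathcal G(\tilde u^{h_2},\tilde p^{h_2})+\mathcal R(\tilde u^{h_1},\tilde u^{h_2},\tilde p^{h_1},\tilde p^{h_2})\bigr]$, where $\mathcal G$ is linear and $\mathcal R$ is a lower-order remainder. Since Theorem \ref{strong-exiuni} (pulled back by the smooth $\Phi_h$) yields $\|\tilde u^{h_j}\|_{H^2(\Omega_0)}+\|\tilde p^{h_j}\|_{H^1(\Omega_0)}\le C\lambda$ uniformly in $h\in[-h_0,h_0]$, the source is $O(\lambda|h_1-h_2|)$. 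Applying $H^2\times H^1$-regularity for the Stokes problem on $\Omega_0$ (via the reflection method of Theorem \ref{strong-exiuni}) and absorbing the convective terms for $\lambda$ small, exactly as in the passage from \eqref{vLambda-ineq} to \eqref{cont-dep}, I would obtain
\[
\|v\|_{H^2(\Omega_0)}+\|q\|_{H^1(\Omega_0)}\le C\lambda\,|h_1-h_2|.
\]
Rewriting the lift as an integral over the fixed curve $\partial B_0$ through the change of variables $\Phi_h$, and using the trace inequality together with the smooth $h$-dependence of the Jacobian factors (whose variation multiplies $\mathbb T(u,p)|_{\partial B_0}$, itself of size $\lambda$), yields the desired Lipschitz bound on $\mathcal L$, completing the proof.

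The main obstacle is the last step: producing the factor $\lambda$ in the Lipschitz constant of $\mathcal L$, not just a bounded constant. This gain comes from the fact that $\mathcal L(0,h)\equiv 0$, because at $\lambda=0$ the unique solution is trivial; the pulled-back formulation makes this structural vanishing quantitative by ensuring that every source term perturbed by $\Phi_{h_1}-\Phi_{h_2}$ is multiplied by either $\tilde u^{h_j}$ or $\tilde p^{h_j}$, both of which are themselves $O(\lambda)$. Bookkeeping this gain through the pull-back of the nonlinear equations and of the boundary integral defining $\mathcal L$ is the delicate technical core of the argument.
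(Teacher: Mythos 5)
Your proposal is correct and follows the same geometric strategy as the paper — transport to the fixed domain $\Omega_0$ via a diffeomorphism that is a rigid vertical translation near $B_0$ and the identity near $\partial R$ (your $\Phi_h$ is the paper's $\mathrm{Id}+h\sigma$ with $\sigma$ as in \eqref{sigma}), gain a factor $\lambda$ from the fact that the transported solution is itself $O(\lambda)$ in $H^2\times H^1$, and conclude with the trace theorem and \eqref{f} — but the key analytic step is implemented differently. The paper proves genuine $C^1$ dependence of $h\mapsto(U(h),P(h))$ by writing the transported system as $\mathcal H(h,U(h),P(h))=0$ in \eqref{implicit}, verifying that the linearization $\Upsilon$ is an isomorphism for $|h|<h_0$ and $\lambda$ small, invoking the Implicit Function Theorem, and then bounding the derivative via the explicit system \eqref{dotu-eq} to get \eqref{derUP}, so that the Lipschitz bound on $\mathcal L$ follows from the Mean Value Theorem; you instead run a finite-difference argument on the two transported systems, exactly parallel to the $\lambda$-Lipschitz estimate of Proposition \ref{phi-lip} (the passage from \eqref{vLambda-ineq} to \eqref{cont-dep}). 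Your route is more elementary — it avoids the IFT and the explicit computation of the matrices $R_0,R_1,R_2$ — and your use of the Piola transform keeps the difference $v$ exactly divergence-free, which slightly cleans up the linear theory; what you lose is the differentiability statement \eqref{claim}, which the paper obtains as a byproduct and which has independent shape-derivative interest. Two small points to make explicit if you write this out: (i) to close the $H^2\times H^1$ estimate for $(v,q)$ you must absorb not only the convective terms (small with $\lambda$) but also the $O(|h|)$ perturbation of the principal Stokes part coming from the transported coefficients, which is what forces $h_0$ small — this is the analogue of the paper's isomorphism condition for $\Upsilon$; (ii) the uniform-in-$h$ bound $\|\tilde u^{h}\|_{H^2(\Omega_0)}+\|\tilde p^{h}\|_{H^1(\Omega_0)}\le C\lambda$ requires that the constant $C_h$ in \eqref{reg-est} be uniformly bounded for $|h|\le h_0$, as the paper notes via \eqref{controlU-u}.
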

\begin{proof}
Recall that $R=(-L,L)\times (-H,H)$. Let $ 0<r_1<r_2$  and $D_{r_i}(0)$ be the open disk centered at $(0,0)$ with radius $r_i$.  Choose $h_0\in (0, h^*)$ in such a way that $B_h\subset D_{r_1}(0)\subset D_{r_2}(0)\subset R$ whenever $|h|\leq h_0$; in later steps we may need to choose a possibly smaller $h_0$ that, however, we continue calling $h_0$. Let $\sigma\in  W^{2,\infty}(R, \mathbb{R}^2)$ be defined by
\begin{equation}\label{sigma}
\sigma(x_1,x_2)= F(|x|) e_2 ,
\end{equation}
with $ F\equiv 1$  in $[0,r_1]$, $F\equiv0$ in $[r_2, +\infty)$ and $F\in W^{2,\infty}(r_1,r_2)$ is the polynomial of third degree such that $F(r_1)=1$ and $F(r_2)=F'(r_1)=F'(r_2)=0$.
For $h\in [-h_0, h_0]$, with $h_0$ small,  we view the fluid domain $\Omega_h$ as a variation of $\Omega_0$ via the diffeomorphism $\mathrm{Id}+h\sigma$, that is, $$\Omega_h=(\mathrm{Id}+h\sigma)(\Omega_0).$$
In particular, $\partial B_h = \partial B_0 + he_2$ with unit outer normal vector $n(h)=n(0)\circ (\mathrm{Id}+ he_2)$.
 Let  $J(h)$ denote the Jacobian matrix of the diffeomorphism $\mathrm{Id} + h\sigma$, that is, $$J(h)=I + h \frac{F'(|x|)}{|x|}\left(\begin{matrix}
0&0\\
x_1&x_2
\end{matrix}\right) $$ with $I$ the $2\times 2 $ identity matrix.
Fixing $\lambda\in [0,\overline{\lambda})$, the lift in \eqref{lift} can be  written as
\begin{equation*}
\mathcal{L}(\lambda, h)=-e_2\cdot \int_{\partial B_0+he_2}\mathbb{T}(u(h), p(h))n(h)
\end{equation*}with $\mathbb{T}(u(h), p(h))=\mathbb{T}(u( \lambda, h), p( \lambda, h))$.
Letting
$$U( h )= u(h)\circ  (\mathrm{Id} + h\sigma), \quad P(h )= p(h )\circ (\mathrm{Id} + h\sigma)$$
with $\sigma$ as in \eqref{sigma},
we transform the moving boundary integral into a fixed boundary integral, namely
\begin{equation*}
\mathcal{L}(\lambda, h)=-e_2\cdot\int_{\partial B_0} \mathbb{T}(U(h),P(h)) (n(0)\circ (\mathrm{Id}+he_2)).
\end{equation*}
Note that $(U(0), P(0))=(u(0), p(0))$. We now claim that
\begin{equation}\label{claim}\vspace{0.1em}
h\mapsto (U(h),P(h))\in H^2(\Omega_0) \times H^1(\Omega_0)\mbox{ belongs to }  C^1(-h_0, h_0).\vspace{0.1em}
\end{equation}
 To this end, let $M(h)=(J^{-1}(h))^T$ and  we rewrite \eqref{BVprob} as
\begin{equation*}
\begin{aligned}
-\mu\nabla \cdot (|\det J(h)|M^T(h) M(h)\nabla U(h))\quad &\\+ U(h)\cdot |\det J(h)|M(h)\nabla U(h)  + \nabla \cdot  (|\det J(h)| M(h)P(h) )  =0   \quad &\mbox{in} \quad \Omega_0,\\[5pt] \  |\det J(h)| M(h)\nabla \cdot U(h) =0   \quad &\mbox{in} \quad  \Omega_0,
\end{aligned}
\end{equation*}complemented with the same boundary conditions.
This can also be expressed as
\begin{equation}\label{implicit}
\mathcal{H}(h,U(h), P(h))=0
\end{equation}where $\mathcal{H} :(-h_0,h_0)\times H^2(\Omega_0)\times H^1(\Omega_0) \rightarrow L^2(\Omega_0)\times H^1(\Omega_0)$ is defined by $\mathcal{H}(h, \xi, \varpi)=(\mathcal{H}_1(h, \xi, \varpi), \mathcal{H}_2(h, \xi, \varpi))$ with
\begin{equation}\label{defH}
	\begin{aligned}
	\mathcal{H}_1(h, \xi, \varpi)= &-\mu\nabla \cdot (|\det J(h)|M^T(h) M(h)\nabla \xi)\quad \\&+ \xi \cdot |\det J(h)|M(h)\nabla \xi   + \nabla \cdot  (|\det J(h)| M(h)\varpi ),\\[5pt]
	\mathcal{H}_2(h, \xi, \varpi)= & \ |\det J(h)| M(h)\nabla \cdot \xi.
	\end{aligned}
\end{equation}Due to the expression $\eqref{sigma}$, we are able to compute  $|\det J(h)|M(h)$ and $|\det J(h)|M^T(h)M(h)$ explicitly at second order for $h\rightarrow 0$. In fact,
\begin{equation*}\begin{aligned}
&|\det J(h)|= 1 + h \frac{F'(|x|)}{|x|}x_2,\\[5pt]& M(h)= I +\frac{h}{|\det J(h)|} \frac{F'(|x|)}{|x|}\left(\begin{matrix}
0 & -x_1\\0 & -x_2
\end{matrix}\right) =I +h \frac{F'(|x|)}{|x|}\left(\begin{matrix}
0 & -x_1\\0 & -x_2
\end{matrix}\right) + O(h^2)
\end{aligned}
\end{equation*}yield
\begin{equation}\label{det-M}\begin{aligned}
&	|\det J(h)|M(h)= I + h \frac{F'(|x|)}{|x|} \left(\begin{matrix}x_2&-x_1\\0&0\end{matrix}\right)=: I + h R_0,\\[5pt]
& |\det J(h)| M^T(h)M(h)\\&= I + h \frac{F'(|x|)}{|x|} \left(\begin{matrix}
x_2 & -x_1\\-x_1& -x_2
\end{matrix}\right)+  h^2 (F'(|x|))^2\left(\begin{matrix}
0&0\\0&1
\end{matrix}\right) +O(h^3)\\&=: I + h R_1 + h^2 R_2 +O(h^3),\\[5pt]
\end{aligned}
\end{equation}where $O(h^3)$ contains terms having at least third order with respect to $h$ as $h\rightarrow 0$. Note that the expression of $|\det J(h)|M(h)$ in \eqref{det-M} is exact and obtained without any Taylor expansion for $h\rightarrow 0$.
We have that $\mathcal{H}$ is $C^1$ in a neighborhood of $(0, U(0), P(0))$ since the mappings $h\mapsto \det J(h)$ and $h\mapsto M(h)$ are $C^1(-h_0,h_0)$  with values in $C^1(R, \mathbb{R}^4)$.\\
For $h\in (-h_0, h_0)$, we consider the linearized operator $\Upsilon= D_{(\xi, \varpi)}\mathcal{H} (h, U(h), P(h))$ defined through the Jacobian matrix of $\mathcal{H}$. For any $$(\chi, \Pi)\in \mathcal{X}\times \mathcal{Y}:=(H^2(\Omega_0)\cap H^1_0(\Omega_0))\times (H^1(\Omega_0)\cap L^2_0(\Omega_0)),$$ we have $	\Upsilon(\chi, \Pi)=( \Upsilon_1(\chi, \Pi),\Upsilon_2(\chi, \Pi) )$ with
\begin{equation*}\begin{aligned}
\Upsilon_1( \chi, \Pi)= &-\mu\nabla \cdot (|\det J(h)|M^T(h) M(h)\nabla \chi) + \chi \cdot |\det J(h)|M(h)\nabla U(h) \\& + U(h) \cdot |\det J(h)|M(h)\nabla \chi+ \nabla \cdot  (|\det J(h)| M(h)\Pi),\\[5pt]
\Upsilon_2( \chi, \Pi)= & \ |\det J(h)| M(h)\nabla \cdot \chi.\\[5pt]
\end{aligned}
\end{equation*}
The linear operator $\Upsilon$ is bounded from $\mathcal{X}\times \mathcal{Y}$ into $L^2(\Omega_0)\times \mathcal{Y}$. To show that $\Upsilon$ is an isomorphism,  given $(\varphi_1, \varphi_2)\in L^2(\Omega_0)\times \mathcal{Y}$, we have to prove that there exists a unique solution $(\chi, \Pi)\in \mathcal{X}\times \mathcal{Y}$ to
\begin{equation*}\begin{aligned}
&-\mu \Delta \chi + \chi \cdot \nabla U(h)+ U(h) \cdot \nabla \chi + \nabla \Pi \\&+ h \left(-\mu \nabla \cdot  R_1 \nabla \chi +\chi \cdot R_0 \nabla U(h) +  U(h)\cdot  R_0\nabla \chi  + \nabla \cdot (R_0 \Pi) \right)\\&
 -h^2 \mu \nabla \cdot R_2 \nabla \chi + O(h^3)= \varphi_1\quad &\mbox{in}  \ \Omega_0, \\[5pt]&\ \nabla\cdot \chi + h R_0 \nabla \cdot \chi  = \varphi_2 \quad &\mbox{in} \  \Omega_0.
\end{aligned}
\end{equation*}
 This linear elliptic problem admits a unique solution provided that
\begin{equation*}
|h|<h_0\qquad \mbox{and} \quad 	\|U(h)\|_{H^2(\Omega_0)}< r
\end{equation*}for $h_0, r>0$ small enough. For $|h|<h_0$ and $\sigma$ as in \eqref{sigma}, we have \begin{equation}\label{controlU-u}\begin{aligned}
 c\|u(h)\|_{H^2(\Omega_h)}\leq &\|U(h)\|_{H^2(\Omega_0)}\leq C\|u(h)\|_{H^2(\Omega_h)},\\[2pt] c\|p(h)\|_{H^1(\Omega_h)}\leq &\|P(h)\|_{H^1(\Omega_0)}\leq C\|p(h)\|_{H^1(\Omega_h)},
 \end{aligned}
\end{equation} with constants $0<c\leq C$ independent of $h$. Then, by taking $\lambda \in [0,\overline{\lambda})$, the bound \eqref{reg-est}, where the constant $C_h$ is uniformly bounded for $|h|<h_0$,
yields the needed smallness condition for $U(h)$, so that $\Upsilon$ is an isomorphism. Therefore, by applying the Implicit Function Theorem to \eqref{implicit}, we conclude \eqref{claim}.\\
Moreover, the derivatives $U'(h)$ and $P'(h)$, whose existence follows from \eqref{claim}, satisfy
\begin{equation}\label{eq-deriv}
	\Upsilon(U'(h), P'(h))= - \partial_h \mathcal{H} (h,U(h), P(h)).
\end{equation}From \eqref{det-M}, we know that for any $h$ (resp.  $h\rightarrow 0$)
\begin{equation*}
	\frac{d}{dh}(|\det J(h)| M(h))=R_0, \quad 	\frac{d}{dh}(|\det J(h)| M^T(h)M(h))=R_1+2hR_2 + O(h^2).
\end{equation*}
Then, recalling the definition \eqref{defH}, \eqref{eq-deriv} and the fact that $\Upsilon$ is an isomorphism imply that $(U'(h), P'(h))$ is uniquely determined by the linear elliptic problem
\begin{equation}\label{dotu-eq}
\begin{aligned}
&-\mu\Delta U'(h) +U'(h)\cdot \nabla U(h) +U(h)\cdot \nabla U'(h)+ \nabla  P'(h) \\&= S_0(U(h), P(h)) + hS_1(U'(h), P'(h), U(h))+O(h^2)   &\mbox{in} \ \Omega_0, \\[5pt]
&\nabla \cdot  U'(h)  = -R_0 \nabla \cdot U(h) - h R_0 \nabla \cdot U'(h)  &\mbox{in} \ \Omega_0,\\[5pt]
&(U'(h), P'(h))\in \mathcal{X}\times \mathcal{Y},
\end{aligned}
\end{equation}
with
\begin{equation*}\begin{aligned}
S_0(U(h), P(h))=&\  \mu \nabla \cdot R_1 \nabla U(h) - U(h)\cdot R_0 U(h) - \nabla \cdot (R_0 P(h)), \\[5pt]
S_1(U'(h), P'(h), U(h))=&\ \mu \nabla \cdot (R_1 \nabla U'(h) + 2R_2\nabla U(h))- U'(h)\cdot R_0 \nabla U(h)\\&- U(h)\cdot R_0 \nabla U'(h) - \nabla \cdot (R_0 P'(h)).
\end{aligned}
\end{equation*}
For $h\in (-h_0,h_0)$, with $h_0$ small, we have
\begin{equation*}\begin{aligned}
&\|U'(h)\|_{H^2(\Omega_0)} + \|P'(h)\|_{H^1(\Omega_0)} \\ &\leq C( \|U'(h)\!\cdot\! \nabla U(h) +U(h) \!\cdot\! \nabla U'(h) \!+\! S_0( U(h),P(h))\|_{L^2(\Omega_0)}  + \|R_0\nabla\! \cdot\! U(h)\|_{H^1(\Omega_0)}).\\[2pt]
\end{aligned}\end{equation*}
Since  $(U(h),P(h))\in H^2(\Omega_0)\times H^1(\Omega_0)$ due to \eqref{controlU-u} and Theorem \ref{strong-exiuni}, we bound the right-hand side of the above expression as
\begin{equation*}\begin{aligned}
&\|U'(h)\cdot \nabla U(h)+ U(h)\cdot \nabla U'(h)\|_{L^2(\Omega_0)}\leq C \|\nabla U'(h)\|_{L^2(\Omega_0)}\| U(h)\|_{H^2(\Omega_0)},\\[5pt]
&\|S_0( U(h), P(h))\|_{L^2(\Omega_0)}+ \|R_0\nabla \cdot U(h)\|_{H^1(\Omega_0)} \leq C(\|U(h)\|_{H^2(\Omega_0)} + \|P(h)\|_{H^1(\Omega_0)}),\\[2pt]
\end{aligned}
\end{equation*}where in the second inequality we used that $\sigma\in W^{2,\infty}(R, \mathbb{R}^2)$, see \eqref{sigma}.
Testing the first equation in \eqref{dotu-eq} with $U'(h)$, using \eqref{controlU-u} and \eqref{energy-est-gen}-\eqref{L2pressure-est}  yield
\begin{equation*}\begin{aligned}
\|\nabla U'(h)\|_{L^2(\Omega_0)}&\leq C(\| U(h)\|_{H^1(\Omega_0)}+\| U(h)\|^2_{H^1(\Omega_0)} + \| P(h)\|_{L^2(\Omega_0)} )\\[2pt] &\leq  C (\| U(h)\|_{H^1(\Omega_0)}+\| U(h)\|^2_{H^1(\Omega_0)}).
\end{aligned}
\end{equation*}
Summarizing, we obtain
\begin{equation}\label{derUP}\begin{aligned}
&\|U'(h)\|_{H^2(\Omega_0)} + \|P'(h)\|_{H^1(\Omega_0)}\\[2pt]&\leq C\left( \|U(h)\|_{H^2(\Omega_0)}(1+\| U(h)\|_{H^1(\Omega_0)}+\| U(h)\|^2_{H^1(\Omega_0)}) + \|P(h)\|_{H^1(\Omega_0)}\right)\\[2pt]&\leq C(\lambda + \lambda^3)\leq C\lambda
\end{aligned}
\end{equation}for any $\lambda\in[0,\overline{\lambda})$, where in the second inequality we used \eqref{controlU-u} and \eqref{energy-est}-\eqref{reg-est}.\\
Finally, we estimate the variation of the lift for small values of $h$, say $|h|< h_0$. By taking $h_1,h_2\in (-h_0,h_0)$, from the Trace Theorem we have
\begin{equation*}\begin{aligned}
&|	\mathcal{L}(\lambda, h_1) - 	\mathcal{L}(\lambda, h_2) |\\&= \left|\int_{\partial B_0} \mathbb{T}(U(h_1),P(h_1)) (n(0)\circ (\mathrm{Id}+h_1e_2) )- \mathbb{T}(U(h_2),P(h_2)) (n(0)\circ (\mathrm{Id}+h_2e_2) )\right|\\&
\leq \int_{\partial B_0}| \mathbb{T}(U(h_1),P(h_1)) - \mathbb{T}(U(h_2),P(h_2))| \\&\quad + \int_{\partial B_0} | \mathbb{T}(U(h_2),P(h_2))|  \cdot |n(0)\circ (\mathrm{Id} + h _1e_2)-n(0)\circ (\mathrm{Id} + h_2 e_2)|\\&
\leq C(\|U(h_1)- U(h_2)\|_{H^2(\Omega_0)}+ \|P(h_1)-P(h_2)\|_{H^1(\Omega_0)}) \\&\quad +C(\|U(h_2)\|_{H^2(\Omega_0)}+ \|P(h_2)\|_{H^1(\Omega_0)})|h_1-h_2|.\\[5pt]
\end{aligned}
\end{equation*}
 Then,  \eqref{derUP} and the Mean Value Theorem yield
\begin{equation*}\begin{aligned} &|	\mathcal{L}(\lambda, h_1) - \mathcal{L}(\lambda, h_2) |\\&\leq C\lambda |h_1-h_2| + C(\|u(h_2)\|_{H^2(\Omega_{h_2})}+ \|p(h_2)\|_{H^1(\Omega_{h_2})})|h_1-h_2|\leq C\lambda|h_1-h_2|\\[2pt]
\end{aligned}
\end{equation*}
using \eqref{controlU-u} and \eqref{reg-est} in $\Omega_{h_2}$. Then, the monotonicity property \eqref{f} ensures that, if $-h_0<h_2< h_1< h_0$,
\begin{equation*}\begin{aligned}
	\phi (\lambda, h_1) - \phi (\lambda, h_2) &= f( h_1) - f ( h_2) - \mathcal{L}(\lambda, h_1) + \mathcal{L}(\lambda,h_2) \geq (\gamma  -C\lambda)(h_1-h_2).\\[2pt]
\end{aligned}
\end{equation*}There exists $\lambda_0\in(0,\overline{\lambda}]$ such that
$\gamma  -C\lambda_0\geq \gamma/2$. Therefore, $h\mapsto \phi(\lambda, h)$ is continuous and strictly increasing in $[-h_0,h_0]$ (with a possible smaller $h_0$)  for all $\lambda\in [0,\lambda_0).$
\end{proof}

\subsection{Conclusion of the proof}\label{conclu-proof}

Let $(u(\lambda,h), p(\lambda, h))$ be a solution to \eqref{BVprob} and let $\phi(\lambda, h)$ be the corresponding global force in \eqref{phi}.
Then the triple $(u, p, h)$ is a solution to \eqref{FSI} if and only if
\begin{center}\vspace{0.1em}$(u(\lambda,h), p(\lambda, h))$ solves \eqref{BVprob} and $\phi(\lambda,h)=0.\vspace{0.1em}$\end{center}
Therefore, Theorem \ref{main-theo} follows once we prove:

\begin{proposition}
	Let $\phi$ be as in \eqref{phi} and $(\lambda_0,h_0)$ be as in Proposition \ref{phi-incre}. Then, there exist $\Lambda_1\in(0,\lambda_0]$ and a unique $\mathfrak{h}\in C^0[0,\Lambda_1)$ such that, for all $\lambda \in[0,\Lambda_1)$, $\phi(\lambda, h)=0$ if and only if $h=\mathfrak{h}(\lambda).$ Moreover, $\|\mathfrak{h}\|_{L^\infty(0, \Lambda_1)}\leq h_0$.
\end{proposition}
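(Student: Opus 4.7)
The strategy is to combine Propositions~\ref{phi-lip} and~\ref{phi-incre} with an intermediate value argument, then handle uniqueness outside $[-h_0,h_0]$ using the blow-up assumption~\eqref{f-prop} and the bound of Theorem~\ref{lift-theo}. First, I would note that when $\lambda=0$ the boundary conditions in \eqref{BVprob} are homogeneous and the only strong solution is $(u,p)\equiv 0$ (by uniqueness from Theorem~\ref{strong-exiuni}), hence $\mathcal{L}(0,h)\equiv 0$ and $\phi(0,h)=f(h)$. From \eqref{f} with $f(0)=0$ one gets $\phi(0,h_0)\ge\gamma h_0>0$ and $\phi(0,-h_0)\le -\gamma h_0<0$, so in particular $\mathfrak{h}(0)=0$ is forced.

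Next, I would shrink $\lambda_0$ to some $\Lambda_1\in(0,\lambda_0]$ so that both endpoint inequalities persist for $\lambda\in[0,\Lambda_1)$. Using the Lipschitz continuity $|\phi(\lambda,\pm h_0)-\phi(0,\pm h_0)|\le C\lambda$ delivered by Proposition~\ref{phi-lip}, it suffices to take $\Lambda_1\le \gamma h_0/(2C)$ to guarantee
\[
\phi(\lambda,h_0)\ge \tfrac{\gamma h_0}{2}>0,\qquad \phi(\lambda,-h_0)\le -\tfrac{\gamma h_0}{2}<0\qquad\forall\lambda\in[0,\Lambda_1).
\]
Proposition~\ref{phi-incre} then gives continuity and strict monotonicity of $h\mapsto\phi(\lambda,h)$ on $[-h_0,h_0]$, so by the intermediate value theorem there exists a unique $\mathfrak{h}(\lambda)\in(-h_0,h_0)$ with $\phi(\lambda,\mathfrak{h}(\lambda))=0$. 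This already gives the bound $\|\mathfrak{h}\|_{L^\infty(0,\Lambda_1)}\le h_0$.

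The step I expect to be the most delicate is excluding zeros of $\phi(\lambda,\cdot)$ outside $[-h_0,h_0]$, which is needed for the genuine uniqueness statement. I would split the complement into two pieces. Near the horizontal boundary, namely for $h$ close to $-H+\delta_b$ (respectively $H-\delta_t$), assumption~\eqref{f-prop} makes $|f(h)|$ blow up at a rate at least $\epsilon_b(h)^{-3/2}$ (resp.\ $\max\{\epsilon_t(h)^{-3/2},U\epsilon_t(h)^{-3}\}$), whereas Theorem~\ref{lift-theo} bounds $|\mathcal{L}(\lambda,h)|$ by a constant times $\lambda$ times the \emph{same} quantity. Choosing $\Lambda_1$ further smaller than $K/(2C)$ (with $K$ from~\eqref{f-prop} and $C$ from~\eqref{est-lift}) ensures $|\mathcal{L}(\lambda,h)|<|f(h)|$ in these boundary strips, so $\phi$ has the sign of $f$ and cannot vanish. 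On the remaining intermediate compact sub-intervals of $(h_0,H-\delta_t)$ and $(-H+\delta_b,-h_0)$, the gaps $\epsilon_b(h),\epsilon_t(h)$ are uniformly bounded below, so \eqref{est-lift} gives $|\mathcal{L}(\lambda,h)|\le C\lambda$ uniformly, while strict monotonicity of $f$ yields $|f(h)|\ge\gamma h_0>0$; shrinking $\Lambda_1$ once more rules out zeros here as well.

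Finally, continuity of $\mathfrak{h}$ follows directly: for $\lambda_1,\lambda_2\in[0,\Lambda_1)$, combining the strict monotonicity estimate $\phi(\lambda_2,\mathfrak{h}(\lambda_1))-\phi(\lambda_2,\mathfrak{h}(\lambda_2))\ge(\gamma/2)\bigl(\mathfrak{h}(\lambda_1)-\mathfrak{h}(\lambda_2)\bigr)$ from the proof of Proposition~\ref{phi-incre} with the Lipschitz bound of Proposition~\ref{phi-lip} applied at the fixed height $h=\mathfrak{h}(\lambda_1)$ gives
\[
\tfrac{\gamma}{2}\bigl|\mathfrak{h}(\lambda_1)-\mathfrak{h}(\lambda_2)\bigr|\le \bigl|\phi(\lambda_1,\mathfrak{h}(\lambda_1))-\phi(\lambda_2,\mathfrak{h}(\lambda_1))\bigr|\le C|\lambda_1-\lambda_2|,
\]
so in fact $\mathfrak{h}$ is Lipschitz (hence continuous) on $[0,\Lambda_1)$, completing the proposition and therefore Theorem~\ref{main-theo}.
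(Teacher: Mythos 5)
Your proof is correct and follows essentially the same route as the paper: a sign change of $\phi(\lambda,\cdot)$ at $\pm h_0$ combined with the monotonicity of Proposition \ref{phi-incre} on $[-h_0,h_0]$, and a comparison of the blow-up of $f$ in \eqref{f-prop} with the lift bound of Theorem \ref{lift-theo} (which holds for every strong solution, so it also excludes zeros of any multi-valued branch) outside $[-h_0,h_0]$. Two minor remarks: your two-sided estimate $\tfrac{\gamma}{2}|\mathfrak{h}(\lambda_1)-\mathfrak{h}(\lambda_2)|\le C|\lambda_1-\lambda_2|$ actually upgrades the paper's mere continuity of $\mathfrak{h}$ to Lipschitz continuity, provided you note that the Lipschitz constant of Proposition \ref{phi-lip} can be taken uniform for $|h|\le h_0$ (true, since such $h$ are bounded away from collision).
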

\begin{proof} We prove the result in two steps, namely by analyzing the behavior of $\phi$ in two different subregions of $[0,\lambda_0)\times(-H+\delta_b, H-\delta_t)$.\par
We start by considering the case when $|h|$ is close to 0.
Let again $\overline{h}=H-\max\{\delta_b,\delta_t\}$. We claim that
there exists $\widetilde{\lambda}\in (0,\lambda_0]$ and a unique $\mathfrak{h}\in C^0[0,\widetilde{\lambda})$ such that
\begin{equation}\label{claim1}
\forall (\lambda,h)\in[0, \widetilde{\lambda})\times[-h_0,h_0]\qquad\phi(\lambda, h)=0\ \Longleftrightarrow\ h=\mathfrak{h}(\lambda).
\end{equation}
To this end, we notice that Theorem \ref{strong-exiuni} implies that, when $\lambda=0$, the unique solution to \eqref{BVprob} is
$(u,p)=(0,0)$, regardless of the value of $h\in (-H+\delta_b, H-\delta_t)$.	Hence, $\phi(0,0)=0$.
Moreover, by Proposition \ref{phi-incre} we know that $h\mapsto\phi(0,h)$ is continuous and strictly increasing in $[-h_0,h_0]$.
These two facts imply that
\begin{equation}\label{(i)}
\phi(0,-h_0)<0<\phi(0,h_0).
\end{equation}
In turn, by Proposition \ref{phi-lip} we know that $\lambda\mapsto\phi (\lambda,h)$ is continuous in $[0,\overline{\lambda})$ for all $h\in[-h_0,h_0]$. By \eqref{(i)} and by compactness, we then infer that there exists
$\widetilde{\lambda}\in (0,\lambda_0]$ such that
\begin{equation}\label{(ii)}
\phi(\lambda,-h_0)<0<\phi(\lambda,h_0)\qquad\forall \lambda\in[0,\widetilde{\lambda})
\end{equation}and, by invoking again Proposition \ref{phi-incre}, that $h\mapsto\phi (\lambda,h)$ is continuous and strictly increasing in $[-h_0,h_0]$ for all $\lambda\in[0,\widetilde{\lambda})$.
Together with \eqref{(ii)}, this implies that for all $\lambda\in[0,\widetilde{\lambda})$ there exists a unique
	$\mathfrak{h}(\lambda)\in[-h_0,h_0]$ such that $\phi(\lambda,\mathfrak{h}(\lambda))=0$. This defines the function
$\lambda\mapsto\mathfrak{h}(\lambda)$ in the interval $[0,\widetilde{\lambda})$. Its continuity follows by the (separated) continuities proved in
Propositions \ref{phi-lip} and \ref{phi-incre}. The proof of \eqref{claim1} is so complete.\par
We now claim that there exists $\Lambda_1\in(0,\widetilde{\lambda}]$ such that
\begin{equation}\label{claim2}
\phi( \lambda,h)\neq 0\qquad \forall ( \lambda,h) \in [0, \Lambda_1)\times\Big[(-H+\delta_b, H-\delta_t)\setminus [-h_0,h_0]\Big].
\end{equation}
Recall that in this set $\phi(\lambda, h)$ may be multi-valued, see Theorem \ref{lift-theo}.
In order to prove \eqref{claim2}, from \eqref{f}-\eqref{f-prop} we know that there exists $K_0\in (0,K]$ such that
\begin{equation}\label{f-est}
\begin{aligned}
f(h) \leq  -K_0(\eps_b(h))^{-3/2}  \qquad &\mbox{for} \quad h\in(-H+ \delta_b,-h_0),\\[2pt]
	f(h)\geq  K_0\max \{ (\eps_t(h))^{-3/2}, U(\eps_t(h))^{-3}\} \qquad &\mbox{for} \quad  h\in (h_0, H-\delta_t),\\[5pt]
\end{aligned}
\end{equation}
while from Theorem \ref{lift-theo} there exists (a different) $C>0$ such that
\begin{equation}\label{lift-est}
\begin{aligned}
\mathcal{L}(\lambda,h) \geq  -C(\eps_b(h))^{-3/2} \ \lambda
\qquad &\mbox{for} \quad  h\in(-H+ \delta_b,-h_0), \\[2pt]
\mathcal{L}(\lambda,h) \leq C\max\{(\eps_t(h))^{-3/2}, U(\eps_t(h))^{-3}\} \ \lambda  \qquad &\mbox{for} \quad  h\in (h_0, H-\delta_t).\\[2pt]
\end{aligned}
\end{equation}
Gathering \eqref{f-est}-\eqref{lift-est} together yields
\begin{equation*}
\begin{aligned}
\phi(\lambda, h)\leq   ( -K_0 +C\lambda)(\eps_b(h))^{-3/2}
\qquad &\mbox{for} \quad  h\in(-H+ \delta_b,-h_0), \\[2pt]
\phi(\lambda,h)\geq  ( K_0-C\lambda )\max \{(\eps_t(h))^{-3/2}, U(\eps_t(h))^{-3} \}  \qquad &\mbox{for} \quad  h\in (h_0, H-\delta_t).\\[2pt]
\end{aligned}
\end{equation*}
Then, there exists $\Lambda_1\in (0,\widetilde{\lambda}]$ such that \eqref{claim2} holds and the statement of the proposition follows from \eqref{claim1} and \eqref{claim2}.
\end{proof}

\begin{remark}
In fact, the proof of \eqref{claim2} shows that if $\lambda>0$ is small, then
$$h_0<h<H-\delta_t\ \Longrightarrow\ \phi(\lambda,h)>0\quad\mbox{and}\quad-H+\delta_b<h<-h_0\ \Longrightarrow\ \phi(\lambda,h)<0.\\[2pt]$$
From a physical point of view, this means that, for small Reynolds numbers, the global force $\phi=\phi(\lambda, h)$ in \eqref{phi}
pushes downwards the body if $B_h$ is close to the upper boundary $\Gamma_t$, whereas it pushes the body upwards
if $B_h$ is close to the lower boundary $\Gamma_b$.
\end{remark}

\section{Symmetric configuration}\label{FSIsymm}

We consider here a symmetric framework for \eqref{FSI}, that is, when
\begin{equation*}
	(x_1,x_2)\in \partial B \iff (x_1,-x_2)\in \partial B
\end{equation*}
 and the boundary data are symmetric with respect to the line $x_2=0$. Therefore, the FSI problem \eqref{FSI} is modified on $\Gamma_b$ and reads
\begin{equation}\label{FSI-sym}
\begin{array}{cc}
-\mu\Delta u +u\cdot \nabla u + \nabla p = 0,\qquad\nabla \cdot u =0\quad\mbox{in}\quad \Omega_h\\
u_{|_{\partial B_h}}\!=0, \quad \!u_{|_{\Gamma_b}}\!= u_{|_{\Gamma_t}}\!=\lambda Ue_1,\quad
u_{|_{\Gamma_l}}\!=\lambda V_{\rm in}(x_2)e_1,\quad u_{|_{\Gamma_r}}\!=\lambda V_{\rm out}(x_2)e_1,
\\[5pt]
f(h)=-e_2\cdot \int_{\partial B_h} \mathbb{T}(u,p)n,\\[2pt]
\end{array}
\end{equation} with $\lambda\geq 0$, $U\in\{0,1\}$ (up to normalization). Here,
$V_{\rm in},V_{\rm out}\in W^{2,\infty}(-H,H)$ are now even functions satisfying
\begin{equation}\label{match-conds-sym}
V_{\rm in} (\pm H)=  V_{\rm out}(\pm H)=U,\qquad\int_{-H}^H V_{\rm in}(x_2) dx_2=\int_{-H}^H V_{\rm out}(x_2) dx_2.
\end{equation}
In this symmetric framework, $\delta_b=\delta_t=\delta$ and $h\in(-H+\delta, H-\delta).$ Then, we  prove that the unique curve $\mathfrak{h}(\lambda)$ found in Theorem \ref{main-theo} reduces to $\mathfrak{h}(\lambda)\equiv 0$, namely that the unique  equilibrium position is symmetric. Again, we expect this position to be stable, at least for small $\lambda.$

\begin{theorem}\label{symmtheo}
	Let $V_{\rm in}$, $V_{\rm out}\in W^{2,\infty}(-H,H)$ be even functions satisfying \eqref{match-conds-sym} and $ f\in C^0(-H+\delta, H-\delta)$ satisfying $f(0)=0$ and \eqref{f}-\eqref{f-prop} with $\delta_b=\delta_t=\delta$. There exists $\Lambda_1 >0$ such that for  $\lambda\in[0,\Lambda_1)$ the FSI problem \eqref{FSI-sym} admits a unique strong solution $(u(\lambda,h), p(\lambda,h),h)\in H^2(\Omega_h)\times H^1(\Omega_h)\times (-H+\delta, H-\delta)$ given by
	\begin{equation*}
	\big(u^0(\lambda,0), p^0(\lambda,0),0\big),
	\end{equation*}
	where $(u^0(\lambda,0), p^0(\lambda,0))$ is the unique solution to the first two lines in \eqref{FSI-sym} for $h=0$ and has the following symmetries:
	\begin{equation*}
	u^0_1(x_1,-x_2)=u^0_1(x_1, x_2), \quad u^0_2(x_1,-x_2)=-u^0_2(x_1, x_2), \quad p^0(x_1,-x_2)=p^0(x_1,x_2).
	\end{equation*}
\end{theorem}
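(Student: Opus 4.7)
The plan is in three steps: first, show that $h = 0$ produces a strong symmetric solution to the Navier--Stokes part of \eqref{FSI-sym}; second, verify that any such symmetric solution has vanishing lift, so that $h = 0$ is an equilibrium; third, identify $h = 0$ with the unique equilibrium produced by adapting Theorem \ref{main-theo} to the symmetric setting.

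For step one, shrink $\Lambda_1$ if necessary so that $\Lambda_1 \le \Lambda(0)$; Theorem \ref{strong-exiuni} (applied to the symmetric boundary data) then yields a unique strong solution $(u^0, p^0) \in H^2(\Omega_0) \times H^1(\Omega_0)$ to the first two lines of \eqref{FSI-sym} at $h = 0$. The domain and boundary data are invariant under the reflection $(u_1, u_2)(x_1, x_2) \mapsto (u_1, -u_2)(x_1, -x_2)$, $p(x_1, x_2) \mapsto p(x_1, -x_2)$: indeed, $B$ is symmetric with respect to $\{x_2 = 0\}$, the data on $\Gamma_b$ and $\Gamma_t$ coincide ($\lambda U e_1$), and $V_{\rm in}, V_{\rm out}$ are even. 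Applying the reflection to $(u^0, p^0)$ produces another strong solution, and uniqueness forces the three symmetries stated in the theorem.

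For step two, these symmetries imply that $\mathbb{T}_{21}(u^0, p^0) = \mu(\partial_2 u^0_1 + \partial_1 u^0_2)$ is odd and $\mathbb{T}_{22}(u^0, p^0) = 2\mu \partial_2 u^0_2 - p^0$ is even in $x_2$, while the symmetry of $B$ yields $n(x_1, -x_2) = (n_1, -n_2)(x_1, x_2)$ for the outward unit normal to $\partial B_0$. Splitting $\partial B_0$ into its upper and lower halves and changing variables $x_2 \to -x_2$ on the lower half, each of $\mathbb{T}_{21} n_1$ and $\mathbb{T}_{22} n_2$ cancels between the two halves, whence $\mathcal{L}(\lambda, 0) = 0$. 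Together with $f(0) = 0$, this gives $\phi(\lambda, 0) = 0$, so $(u^0(\lambda, 0), p^0(\lambda, 0), 0)$ solves \eqref{FSI-sym}.

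For step three, one has to revisit Sections \ref{fluid-section}--\ref{sec-proofMain} with the symmetric boundary data: the solenoidal extension \eqref{solext} must be rebuilt symmetrically (one cut-off near each of $\Gamma_b$ and $\Gamma_t$), yielding symmetric analogues of Theorems \ref{strong-exiuni} and \ref{lift-theo}, after which Propositions \ref{phi-lip} and \ref{phi-incre} together with the argument of Subsection \ref{conclu-proof} transfer with essentially no change. The outcome is a unique equilibrium $\mathfrak{h}(\lambda) \in [-h_0, h_0]$ for $\lambda \in [0, \Lambda_1)$; by step two, necessarily $\mathfrak{h}(\lambda) \equiv 0$. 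The principal difficulty is the bookkeeping of step three, since the symmetric framework modifies the lift blow-up rates near $\Gamma_b$; however, the symmetric assumption \eqref{f-prop} (with $\delta_b = \delta_t = \delta$) and the unchanged structural features of the arguments render this routine.
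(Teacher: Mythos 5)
Your proposal is correct and follows essentially the same route as the paper: reflection plus uniqueness of the Navier--Stokes solution at $h=0$ yields the stated symmetries, the parity of $\mathbb{T}(u^0,p^0)n$ gives $\mathcal{L}(\lambda,0)=0$ and hence $\phi(\lambda,0)=0$, and the symmetric adaptation of the solenoidal extension and of the machinery behind Theorem \ref{main-theo} produces a unique equilibrium curve $\mathfrak{h}$, which must therefore vanish identically. Your explicit odd/even computation of the lift integrand fills in a step the paper leaves implicit, but the argument is the same.
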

\begin{proof}
	The first step is to obtain the counterpart of Theorem \ref{strong-exiuni}. The case $U=0$ is already included in the original statement. When $U=1$, we construct the cut-off functions $\zeta_l$ and $\zeta_r$ in a slightly different way with Figure \ref{cutoffsU=1} replaced by Figure \ref{cutoffs-sym}. We define the solenoidal extension as in \eqref{solext}, which satisfies the boundary conditions in \eqref{FSI-sym}.
	
		\begin{figure}[!h]
		\begin{center}
			\includegraphics[scale=0.13]{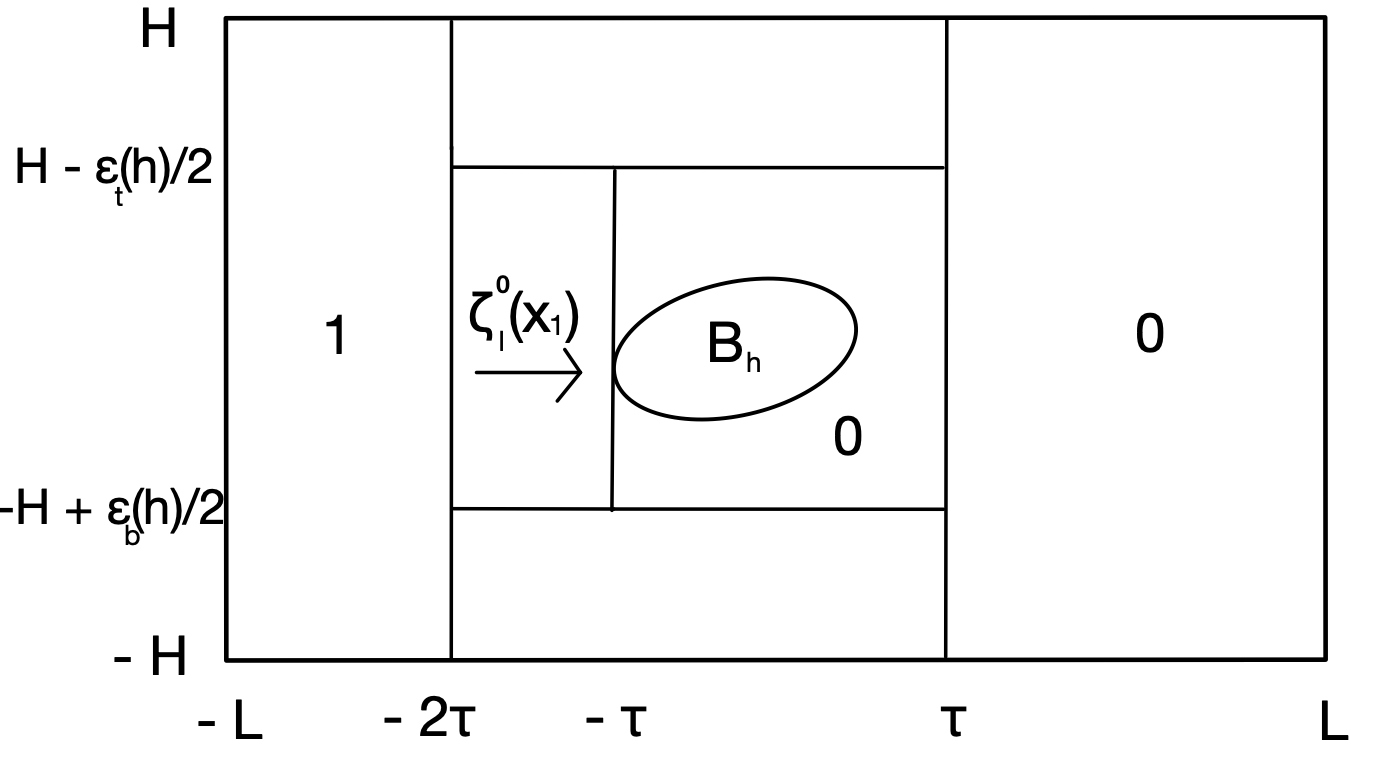}\includegraphics[scale=0.13]{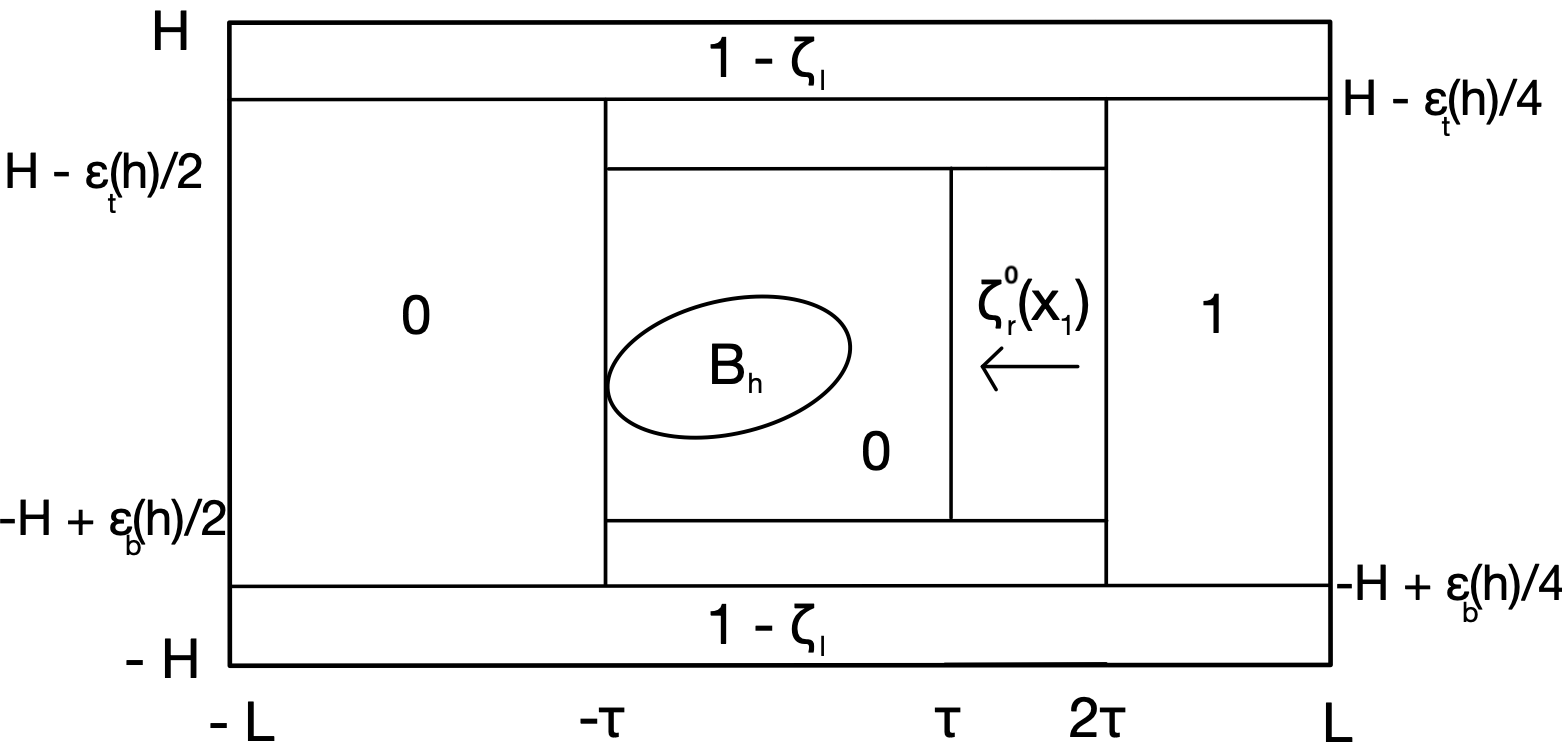}
			\caption{The cut-off functions $\zeta_l$ (left) and $\zeta_r$ (right) on $\overline{ R}$ when $U=1$ for the symmetric configuration.}
			\label{cutoffs-sym}
		\end{center}
	\end{figure}
	 With this construction the refined bound \eqref{energy-est} is replaced by
\begin{equation*}
	\| u\|_{H^1(\Omega_h)}  \leq C	((\eps_b(h))^{-3/2}+(\eps_t(h))^{-3/2})\lambda.\\[2pt]
\end{equation*}
	Hence, in both cases $U\in\{0,1 \}$, by arguing as in the proof of Theorem \ref{strong-exiuni}, we infer that
	there exists $\Lambda=\Lambda(h)>0$  such that for $\lambda\in[0,\Lambda(h))$ the solution $(u,p)$ to
	\begin{equation}\label{BVprob-sym}
	\begin{array}{cc}
	-\mu\Delta u+u\cdot \nabla u + \nabla p = 0,\qquad\nabla \cdot u =0\quad\mbox{in}\quad \Omega_h\\[2pt]
	u_{|_{\partial B_h}}\!=0, \quad \!u_{|_{\Gamma_b}}\!= u_{|_{\Gamma_t}}\!=\lambda Ue_1,\quad
	u_{|_{\Gamma_l}}\!=\lambda V_{\rm in}(x_2)e_1,\quad u_{|_{\Gamma_r}}\!=\lambda V_{\rm out}(x_2)e_1,\\[2pt]
	\end{array}
	\end{equation} is unique for any $h\in(-H+\delta,H-\delta)$. This proves the counterpart of Theorem \ref{strong-exiuni}.\\
	 In particular, for $h=0$ there exists a unique solution $(u^0,p^0)$ to \eqref{BVprob-sym} in $\Omega_0$. Since $\Omega_0$ is symmetric with respect to the line $x_2=0$, the couple
	$(u^*, p^*): \Omega_0 \rightarrow \mathbb{R}^2\times \mathbb{R}$ defined by
		\begin{equation*}\begin{aligned}
	u_1^*(x_1,x_2)= u^0_1(x_1, -x_2), \quad 	u_2^*(x_1,x_2)= -u^0_2(x_1, -x_2),\quad p^*(x_1, x_2)= p^0(x_1,-x_2),\\[2pt]
	\end{aligned}
	\end{equation*} also satisfies \eqref{BVprob-sym} for $h=0$ (see also \cite{GazSpe20}). Therefore, by uniqueness $(u^0, p^0)=(u^*, p^*)$ is also symmetric and, thanks to all these symmetries, we obtain
	\begin{equation*}\mathcal{L}(\lambda, 0)=-e_2\cdot\int_{\partial B_0} \mathbb{T}(u^0(\lambda, 0), p^0(\lambda, 0))n= 0,\\[2pt] \end{equation*}
which implies
\begin{equation}\label{phi0}
	\phi(\lambda,0)= f(0)=0 \qquad \mbox{for} \quad \lambda\in[0,\Lambda(h)).\\[2pt]
\end{equation} From Theorem \ref{main-theo} we know that there exist $\Lambda_1>0$ and a unique curve $\mathfrak{h}\in C^0[0,\Lambda_1)$ such that for $\lambda\in[0,\Lambda_1)$ the unique solution to \eqref{FSI-sym} is given by
$$(u(\lambda,\mathfrak{h}(\lambda)), p(\lambda,\mathfrak{h}(\lambda)),\mathfrak{h}(\lambda)).$$ Thanks to \eqref{phi0}, $\mathfrak{h}(\lambda)\equiv 0$ and this solution coincides with $(u^0(\lambda,0), p^0(\lambda,0),0)$.	
\end{proof}

\section{An application: equilibrium positions of the deck of a bridge}\label{suspension}

A suspension bridge is usually erected starting from the anchorages and the towers. Then the sustaining cables are installed
between the two couples of towers and the hangers are hooked to the cables. Once all these components are in position, they
furnish a stable working base from which the deck can be raised from floating barges. We refer to \cite[Section 15.23]{Podolny} for full details. The deck segments are put in position one aside the other (see Figure \ref{bridge}, left) and have the shape of rectangles while their
cross-section resembles to smoothened irregular hexagons (see Figure \ref{bridge}, right) that satisfy \eqref{B-W2infty}.

\begin{figure}[!h]
	\begin{center}
		\includegraphics[width=6cm]{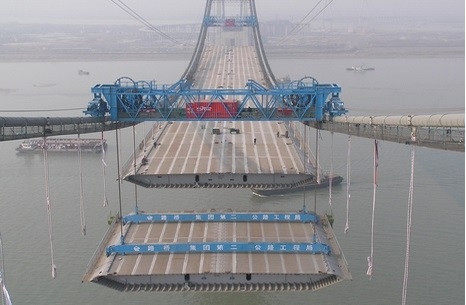}\quad\includegraphics[width=7cm]{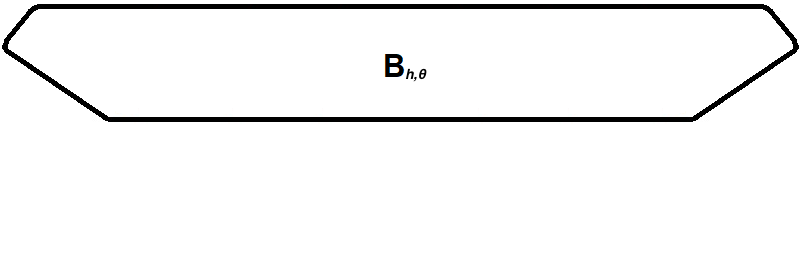}
		\caption{Left: erection of a suspension bridge. Right: sketch of a cross-section.}\label{bridge}
	\end{center}
\end{figure}

This cross-section $B$ plays the role of the obstacle in \eqref{BVprob} while $\Omega_h$ is the region filled by the air.
This region can be either be a virtual box around the deck of the bridge or a wind tunnel around a scaled model of the bridge.
In both cases, we may refer to inflow and outflow also as windward and leeward respectively:
$\lambda V_{\rm in}e_1$ represents the laminar horizontal windward while $\lambda V_{\rm out}e_1$ is the leeward.
Typically, the higher is the altitude the stronger is the wind. Therefore, in this application we consider specific laminar shear flows, which are the Couette flows. Thus, the inflow and outflow now read
\begin{equation}\label{couette}
	V_{\rm in}(x_2)= V_{\rm out}(x_2)=\frac{U}{2H}(x_2+H) \qquad \mbox{for}\qquad x_2\in[-H,H],
\end{equation}
and satisfy \eqref{match-conds}. The windward creates both vertical and torsional displacements of the deck. However, the cross-section of the suspension bridge is also subject to some elastic restoring forces tending to maintain the deck in its original position $B_0$.
These forces are of three different kinds. There is an upwards restoring force due to the elastic action of both the hangers and the sustaining cables of the bridge. The hangers behave as nonlinear springs which may slacken \cite[9-VI]{ammann} so that they have no downwards action and
they be nonsmooth. There is the weight of the deck which acts constantly downwards: this is why there is no odd requirement on the restoring force considered in the model. There is also a nonlinear resistance to both elastic bending and stretching of the whole deck for which $B$ merely represents a cross-section. Moreover, since the boundary of the channel $R$ is virtual and our physical model breaks down in case of collision of $B$ with $\partial R$, we require that there exists an ``unbounded force" preventing collisions.\\
Overall, the position of $B$ depends on both the displacement parameter $h$ and the angle of rotation $\theta$ with respect to the horizontal axis. With the addition of this second degree of freedom, we have
$B=B_{h,\theta}$ and $\Omega=\Omega_{h,\theta}$. A ``plastic'' regime leading to the collapse of the bridge is reached when $\theta=\pm \tfrac{\pi}{4}$ (see \cite{ammann}) since the sustaining cables of the bridge attain their maximum elastic tension.
The strong point of the analysis carried out in this paper is that it applies independently of the part of $\partial B$ closest to $\partial R$.
Therefore, for any $\theta\in(-\tfrac{\pi}{4}, \tfrac{\pi}{4})$, we can apply our general theory considering the family of bodies $B_{h,\theta}$ simply by adapting it to the rotating scenario. The only difference now is that,  when the body is free to rotate, the collision with $\Gamma_b$ and $\Gamma_t$ occurs at $h=-H+\delta_b(\theta)$ and $h= H-\delta_t(\theta)$, where  $\delta_b(\theta)$ and $\delta_t(\theta)$ are positive functions of $\theta$.
For $\theta=0$, $\delta_b(0)$ and $\delta_t(0)$ are as in \eqref{def-delta} while, for $\theta\neq 0$,
\begin{equation*}
	\delta_b(\theta):= -\min_{(x_1,x_2)\in \partial B_{0,\theta}}x_2>0, \qquad \delta_t(\theta):= \max_{(x_1,x_2)\in \partial B_{0,\theta}}x_2>0,
\end{equation*}both being independent of $h$.
 Due to the possible complicated shape of $B$, these functions are not easy to be determined explicitly. For this reason, we define the set of non-contact values of $(h,\theta)$ by
 \begin{equation}\label{admissible}
	A=\{(h,\theta)\in (-H,H)\times (-\tfrac{\pi}{4}, \tfrac{\pi}{4}) \ : \ B_{h,\theta} \subset R \}.
\end{equation}
Clearly, $(0,0) \in A$ and $(h,\theta)\in \partial A$ if and only if $B_{h,\theta}\cap\partial R \neq \emptyset.$ We assume that, for some $K>0$, $f\in C^0(A)$ satisfies
\begin{equation}\label{f-theta-prop}
\begin{aligned}
&\limsup\limits_{ d( B_{h,\theta}, \Gamma_b)\rightarrow 0} \ f(h,\theta)(d( B_{h,\theta}, \Gamma_b))^{3/2}\leq -K,\\[2pt]
&\liminf\limits_{d( B_{h,\theta}, \Gamma_t)\rightarrow 0} \ \frac{f(h,\theta)}{\max \{(d( B_{h,\theta}, \Gamma_t))^{-3/2},U(d( B_{h,\theta}, \Gamma_t))^{-3}\}}\geq K,\\[2pt]
\end{aligned}
\end{equation}
where
 $d( \cdot, \cdot)$ is the distance function. Assumption \eqref{f-theta-prop} generalizes \eqref{f-prop} taking into account the rotational degree of freedom. Moreover, we assume that  \begin{equation}\begin{aligned}\label{f-theta}
& \exists\gamma>0\quad\mbox{s.t.}\quad\frac{f(h_1,\theta)-f(h_2, \theta)}{h_1-h_2}\geq\gamma\qquad\forall (h_1,\theta),(h_2,\theta) \in A,\\[2pt]&
 f(0,0)=0, \qquad f(h,\theta)\theta>0 \quad \forall (h,\theta)\in A \quad \mbox{with} \quad \theta\neq 0.\\[2pt]
 \end{aligned}
 \end{equation}In fact, the second line in \eqref{f-theta} is not mathematically needed but, from a physical point of view, it states that the restoring force does not act at equilibrium and tends to maintain $B$ in an horizontal position.
   A straightforward consequence of Theorem \ref{main-theo}, in the case of the interaction between the wind and the deck of a suspension bridge, is the following:
\begin{corollary}
	Let $V_{\rm in}$, $V_{\rm out}$ be as in \eqref{couette} and $f\in C^0(A)$ satisfy \eqref{f-theta-prop}-\eqref{f-theta}. There exists $\Lambda_1>0$ and a unique $\mathfrak{h}\in C^0[0, \Lambda_1)$ such that, for $\lambda\in[0,\Lambda_1)$ and $\theta\in(-\tfrac{\pi}{4}, \tfrac{\pi}{4})$, the FSI problem \eqref{FSI} admits a unique solution
	$(u_\theta(\lambda,h), p_\theta(\lambda,h),h)\in H^2(\Omega_{h,\theta})\times H^1(\Omega_{h,\theta})\times (-H,H) $, with $(h,\theta)\in A$,
	given by
		\begin{equation*}(u_\theta(\lambda,\mathfrak{h}(\lambda)), p_\theta(\lambda,\mathfrak{h}(\lambda)),\mathfrak{h}(\lambda)).
	\end{equation*}
	Here, \eqref{FSI} is understood with $h$ replaced by the couple $(h, \theta)$.
\end{corollary}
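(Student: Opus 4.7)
The plan is to reduce the statement to a pointwise (in $\theta$) application of Theorem \ref{main-theo}, treating the rotation parameter $\theta \in (-\tfrac{\pi}{4},\tfrac{\pi}{4})$ as fixed and solving only the translational equilibrium.

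First I would observe that a rigid rotation preserves $W^{2,\infty}$ boundary regularity, so for each fixed $\theta \in (-\tfrac{\pi}{4},\tfrac{\pi}{4})$ the rotated obstacle $B_{0,\theta}$ still satisfies \eqref{B-W2infty}. The geometric parameters $\delta_b, \delta_t, \tau$ are replaced by the continuous $\theta$-dependent quantities $\delta_b(\theta), \delta_t(\theta), \tau(\theta)$, and the admissible range of vertical displacements is precisely the $\theta$-slice of $A$, namely $h \in (-H+\delta_b(\theta), H-\delta_t(\theta))$. None of the constructions in Sections \ref{fluid-section}--\ref{sec-proofMain}—the reflection argument producing the $H^2\times H^1$ regularity of $(u,p)$, the explicit solenoidal extension $s$ in \eqref{solext}, the auxiliary field $w$ in \eqref{dual-bogo} converting the lift to the volume integral \eqref{lift-volint}, nor the implicit function argument based on \eqref{implicit}—uses any special symmetry of $B$; they all carry over verbatim with $B$ replaced by $B_{0,\theta}$, yielding in particular the $\theta$-dependent analogues of Theorems \ref{strong-exiuni} and \ref{lift-theo}.

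Second, I would check that the hypotheses \eqref{f-theta-prop}--\eqref{f-theta} on the restoring force, when restricted to a fixed $\theta$, imply the hypotheses \eqref{f}--\eqref{f-prop} for the map $h \mapsto f(h,\theta)$. The $h$-monotonicity is exactly the first line of \eqref{f-theta}. On the $\theta$-slice of $A$ the distances simplify to $d(B_{h,\theta},\Gamma_b) = H-\delta_b(\theta)+h$ and $d(B_{h,\theta},\Gamma_t) = H-\delta_t(\theta)-h$, so \eqref{f-theta-prop} specializes to \eqref{f-prop} with $\delta_b, \delta_t$ replaced by $\delta_b(\theta), \delta_t(\theta)$. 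The base-point $f(0,0)=0$ matches the condition at $\theta=0$; for $\theta\neq 0$ the sign condition $f(h,\theta)\theta>0$ is a statement only about the (omitted) torque balance and plays no role in the translational equilibrium $\phi(\lambda,h)=0$ that we are solving here. Applying Theorem \ref{main-theo} to the $\theta$-rotated configuration with inflow/outflow \eqref{couette} now produces, for each $\theta$, a threshold and a unique continuous curve $\lambda\mapsto \mathfrak{h}(\lambda)$ of equilibrium heights as in the corollary.

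The main obstacle is to arrange the threshold $\Lambda_1$ so that it works for every $\theta \in (-\tfrac{\pi}{4},\tfrac{\pi}{4})$. Tracing through the proof of Theorem \ref{main-theo}, every constant ultimately depends on: the geometry of $B_{0,\theta}$ (entering through $\delta_b(\theta), \delta_t(\theta), \tau(\theta)$ and through the supports of the cut-offs $\zeta_l, \zeta_r, \chi$), the norms of $V_{\rm in}, V_{\rm out}$ (here fixed by \eqref{couette}), and the constant $\gamma, K$ of \eqref{f}--\eqref{f-prop} (which by \eqref{f-theta-prop}--\eqref{f-theta} may be chosen uniform in $\theta$). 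Since $\theta \mapsto B_{0,\theta}$ is continuous, a standard compactness argument on any closed subinterval $[-\theta_0, \theta_0] \subset (-\tfrac{\pi}{4},\tfrac{\pi}{4})$ yields a uniform $\Lambda_1$ on that subinterval, together with the continuity of $\mathfrak{h}$; the non-uniformity as $\theta \to \pm \tfrac{\pi}{4}$ reflects exactly the physically excluded ``plastic" regime in which $\delta_b(\theta)$ or $\delta_t(\theta)$ degenerates. The statement of the corollary is then exactly what this pointwise-in-$\theta$ application of Theorem \ref{main-theo} delivers.
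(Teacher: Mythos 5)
Your proposal matches the paper's treatment: the corollary is presented there as a ``straightforward consequence'' of Theorem \ref{main-theo}, obtained exactly by fixing $\theta$ and rerunning the whole machinery for $B_{0,\theta}$ with $\delta_b(\theta),\delta_t(\theta)$ in place of $\delta_b,\delta_t$, which is what you do. One small correction: $\delta_b(\theta)$ and $\delta_t(\theta)$ do \emph{not} degenerate as $\theta\to\pm\tfrac{\pi}{4}$ (the exclusion of $|\theta|=\tfrac{\pi}{4}$ is purely a modeling restriction on the cables, not a geometric collision), so your compactness argument in fact gives a single $\Lambda_1$ uniform over all of $(-\tfrac{\pi}{4},\tfrac{\pi}{4})$, as the statement requires.
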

The deck of a suspension bridge, in particular its cross-section, may have a nonsmooth boundary.
If $B$ is not  $W^{2,\infty} $ but it is only Lipschitzian, Theorem \ref{strong-exiuni}
ceases to hold and we only know that $(u,p)$ is a weak solution to \eqref{BVprob} so that \eqref{lift} does not hold in a ``strong" sense.
Indeed, since $u\in H^1(\Omega_h)$, see \eqref{energy-est}, we may rewrite the first equation in \eqref{BVprob} as
$-\mu\Delta u+\nabla p=f$ with $f\in L^q(\Omega_h)$ for all $q<2$. Hence, $f\in H^{-\epsilon}(\Omega_h)$ for any $\epsilon>0$. By
applying \cite[Theorem 7]{savare} we then deduce that $u\in H^{1+s}(\Omega_h)$ and $p\in H^s(\Omega_h)$ for all $s<1/2$ but, still,
this does not allow to consider the trace of $\mathbb{T}(u,p)$ as an integrable function over $\partial B_h$.
However, following \cite{GazSpe20} we may define the lift $L$ through a generalized formula. Indeed, from $u \in H^1(\Omega_h)$ we know that $\mathbb{T}(u,p)\in L^2(\Omega_h)$ and, since $\Omega_h$ is a bounded domain, $\mathbb{T}(u,p)\in L^{3/2}(\Omega_h)$. Moreover,
from the first equation in \eqref{BVprob} we obtain
$\nabla\cdot \mathbb{T}(u,p) \in L^{3/2}(\Omega_h)$. Therefore $\mathbb{T}(u,p)\in E_{3/2}(\Omega_h):=\{f\in L^{3/2}(\Omega_h) \ | \ \nabla \cdot f\in L^{3/2}(\Omega_h)\}$. By Theorem III.2.2 in \cite{Galdi-steady} we know that $\mathbb{T}(u,p)n_{|_{\partial\Omega_h}}\in W^{-2/3, 3/2}(\partial\Omega_h)$. Hence, if  $\partial B_h$ is Lipschitzian and $(u,p)$ is a weak solution to \eqref{BVprob}, then the lift exerted by the fluid over $B_h$ is
\begin{equation}\label{weaklift}
\mathcal{L}(\lambda,h)=-e_2 \ \cdot \langle \mathbb{T}(u,p)n,1\rangle_{\partial B_h},
\end{equation}
where $\langle\cdot,\cdot\rangle_{\partial B_h}$ denotes the duality pairing between $W^{-2/3,3/2}(\partial B_h)$ and $W^{2/3, 3}(\partial B_h)$.

\subsection*{Acknowledgments.}The authors warmly thank the anonymous referee for the careful proofreading and several useful remarks.\\
The authors were partially supported by the PRIN project {\it Direct and inverse problems for partial differential equations:
theoretical aspects and applications} and by the Gruppo Nazionale per l'Analisi Matematica, la
Probabilit\`a e le loro Applicazioni (GNAMPA) of the Istituto Nazionale di Alta Matematica (INdAM).\\
{\bf Data availability statement.} Data sharing not applicable to this article as no datasets were generated or analysed during the current study. There are no conflicts of interest.
\vspace{-0.6em}
\bibliographystyle{siam}
\bibliography{references}

\end{document}